\newcommand\rsetminus{\mathbin{\mathpalette\rsetminusaux\relax}}
\newcommand\rsetminusaux[2]{\mspace{-4mu}
  \raisebox{\rsmraise{#1}\depth}{\rotatebox[origin=c]{-20}{$#1\smallsetminus$}}
 \mspace{-4mu}
}
\newcommand\rsmraise[1]{%
  \ifx#1\displaystyle .8\else
    \ifx#1\textstyle .8\else
      \ifx#1\scriptstyle .6\else
        .45%
      \fi
    \fi
  \fi}
\xpatchcmd{\@sect}{\uppercase}{\MakeTextUppercase}{}{}
\xpatchcmd{\@sect}{\uppercase}{\MakeTextUppercase}{}{}
\def\scal#1#2{\langle #1\bv#2 \rangle}
\def\ncp#1#2{#1\langle #2\rangle}
\def\ncs#1#2{#1\langle \!\langle #2\rangle \!\rangle}
\def\calX{{\cal X}}
\def\calG{{\mathcal G}}
\def\calX{{\mathcal X}}
\def\QY{\ncp{\Q}{Y}}
\def\scal#1#2{\langle #1\bv#2 \rangle}
\def\bv{\mid}
\def\abs#1{\bv\!#1\!\bv}
\def\ncp#1#2{#1\langle #2\rangle}
\def\conc{{\tt conc}}
\def\Dom{\mathrm{Dom}}
\def\der{\mathbf{d}}
\newcommand\matop[2]{\genfrac{}{}{0pt}{}{#1}{#2}}
\newcounter{per1}
\def\2#1{\ifnum#1<10 0\fi\the#1}
\xdef\isodayandtime{
{\2\day-\2\month-\the\year\space\2{\count0}:%
\2{\count2}}}
\newcommand{\bi}{\begin{itemize}}
\newcommand{\ei}{\end{itemize}}
\newcommand{\bd}{\begin{description}}
\newcommand{\ed}{\end{description}}
\newcommand{\calA}{{\mathcal A}}
\newcommand{\calZ}{{\mathcal Z}}
\newcommand{\calH}{{\mathcal H}}
\newcommand{\N}{{\mathbb N}}
\newcommand{\Z}{{\mathbb Z}}
\newcommand{\Q}{{\mathbb Q}}
\newcommand{\C}{{\mathbb C}}
\def\abs#1{|#1|}
\def\absv#1{\|#1\|}
 \def\shuffle{\mathop{_{^{\sqcup\!\sqcup}}}} 
\gdef\stuffle{\;%
  \setlength{\unitlength}{0.0125cm}%
  \begin{picture}(20,10)(220,580) 
  \thinlines 
  \put(220,592){\line( 0,-1){ 10}} 
  \put(220,582){\line( 1, 0){ 20}} 
  \put(240,582){\line( 0, 1){ 10}} 
  \put(230,592){\line( 0,-1){ 10}} 
  \put(225,587){\line( 1, 0){ 10}} 
  \end{picture}\; 
}
\newcommand{\Li}{\operatorname{Li}}
\def\L{\mathrm{L}}
\def\H{\mathrm{H}}
\newcommand{\serie}[2]{#1 \langle \! \langle #2 \rangle \! \rangle}
\def\CXX{\serie{\C}{X}}
\def\QY_0{\Q\left\langle{Y_0}\right\rangle}
\def\CXX{\serie{\C}{X}}
 \newcommand{\calB}{{\cal B}}
\def\path{\rightsquigarrow}
\def\bv{\mid}
\def\abs#1{\bv\!#1\!\bv}
\def\scal#1#2{\langle #1\bv#2 \rangle}
\def\ncp#1#2{#1\langle #2\rangle}
\def\ncs#1#2{#1\langle \!\langle #2\rangle \!\rangle}
\begin{document}
\begin{frontmatter}
\title{A local Theory of Domains and its (Noncommutative) Symbolic Counterpart}
\author{V.C. Bui}
\address{Hue University of Sciences, 77 - Nguyen Hue street - Hue city, Vietnam.}
\author{G.H.E. Duchamp}
\address{University Paris 13, Sorbonne Paris City, 93430 Villetaneuse, France,}
\author{V. Hoang Ngoc Minh}
\address{University of Lille, 1 Place Déliot, 59024 Lille, France,} 
\author{Q.H. Ngo}
\address{University of Hai Phong, 171, Phan Dang Luu, Kien An, Hai Phong, Viet Nam} 
\author{K. A. Penson}
\address{Sorbonne Universit\'e, Universit\'e Paris VI, 75252 Paris Cedex 05, France.} 
\date{{\small (Draft)}}
\newpage
\begin{abstract}
It is widely accepted nowadays that polyzetas are connected by polynomial relations.
One way to obtain relations among polyzetas is to consider their generating series
and the relations among the coefficients of these generating series. 
This leads to the re-indexation of the coefficients of the generating series of polylogarithms 
itself, recently mentioned in \cite{BHN,CM} and, in a way, this work is a continuation of \cite{GHM22}.
But, in order to understand the bridge between this extension of this ``polylogarithmic calculus''
and the world of harmonic sums, a local theory of domains has to be done,
preserving quasi-shuffle identities, Taylor expansions and Hadamard products.
In this contribution, we present a sketched and abridged version of this theory.    

As an example of generating series, one can consider the eulerian gamma function, 
\begin{eqnarray*}
\Gamma(1+z)=\exp\biggl(-\gamma z+\sum_{n\ge2}\zeta(n)\dfrac{(-z)^n}{n}\biggr)
\end{eqnarray*}
and this may suggest to regularize the divergent zeta value $\zeta(1)$, for the quasi-shuffle structure,
as to be Euler's $\gamma$ constant.
In the same vein, in \cite{BHN}, we introduce a family of eulerian functions,
\begin{eqnarray*}
\Gamma_{y_k}(1+z)=\exp\biggl(\sum_{n\ge1}\zeta(kn)\dfrac{(-z^k)^n}{n}\biggr),
&\mbox{for}&k\ge2,y_k\in Y=\{y_n\}_{n\ge1}.
\end{eqnarray*}
This being done, in this work, via their analytical aspects, we establish, on one side, their existence 
and the fact that their inverses are entire. On the other side, using the same symmetrization
technique, we give their distributions of zeroes\footnote{This research is funded by the Vietnam National
Foundation for Science and Technology Development (NAFOSTED) under grant number  101.04-2017.320}.
\end{abstract}
\end{frontmatter}
\tableofcontents

\section{Introduction}\label{intro}
This work is partly the continuation of \cite{GHM22,BHN,CM} where it has been established that the
polylogarithms, indexed by the $r$-tuples $(s_1,\ldots,s_r)\in\C^r$, are well defined locally by 
\begin{eqnarray}\label{polylogarithm}
\Li_{s_1,\ldots,s_r}(z):=\sum_{n_1>\ldots>n_r>0}\frac{z^{n_1}}{n_1^{s_1}\ldots n_r^{s_r}},\mbox{for}&|z|<1,
\end{eqnarray}
could be extended, in case $(s_1,\ldots,s_r)\in\N_+^r$, to some series, over the alphabet $X=\{x_0,x_1\}$
generating the monoid $X^*$ with the neutral element $1_{X^*}$ \cite{berstel}. More precisely,
\begin{enumerate}
\item we start to consider \cite{HNM1}
\begin{eqnarray}
\forall w=x_0^{s_1-1}x_1\ldots x_0^{s_r-1}x_1\in X^*x_1,&&\Li_w=\Li_{s_1,\ldots,s_r},
\end{eqnarray}
\item then extend\footnote{This paper uses extensively shuffle and stuffle products (noted $\shuffle$
and $\stuffle$ respectively). For readers unfamiliar with these subjects their definitions are recalled at
the end of this text, see paragraph \ref{stuffledef}.} $\Li_{\bullet}$ as the $\shuffle$-morphism
$(\ncp{\C}{X},\shuffle,1_{X^*})\longrightarrow(\C\{\Li\}_{w\in X^*},\times,1)$ by adding
$\Li_{x_0}(z)=\log(z)$. This morphism is injective and satisfies \cite{SLC43}
\begin{eqnarray}\label{shuffle}
\forall S,T\in\ncp{\C}{X},&&\Li_{S\shuffle T}=\Li_{S}\Li_{T}.
\end{eqnarray}
\item For the sake of symbolic calculations, it is important that, on the one hand, these series should belong
to some ``computable spaces'' and, on the other hand, that the new domain (a) be closed by shuffle products
and (b) that the $\Li_\bullet$ correspondence should preserve the shuffle identity \eqref{shuffle}.
\end{enumerate}
To this end a theory of global domains was presented in \cite{GHM22,BHN,CM}.
Here we focus on what happens in the neighbourhood of zero, therefore, the aim
of this work is manyfold. Let us highligh the many facets of this matter.
\begin{enumerate}
\item Propagate the extension to local Taylor expansions\footnote{Around zero.} as in \eqref{polylogarithm}
and the coefficients of their quotients by $1-z$, namely the harmonic sums, denoted $\H_\bullet$ and defined,
for any $w\in X^*x_1$, as follows\footnote{Here, the ${\tt conc}$-morphism
$\pi_X:(\ncp{\C}{Y},{\tt conc},1_{Y^*})\longrightarrow(\ncp{\C}{X},{\tt conc},1_{X^*})$
is defined by $\pi_X(y_n)=x_0^{n-1}x_1$ and $\pi_Y$ its inverse on $\mathrm{Im}(\pi_X)$.
See \cite{GHM22,BHN,CM} for more details and a full definition of $\pi_Y$.} \cite{words03}
\begin{eqnarray}
\frac{\Li_w(z)}{1-z}=\sum_{N\ge0}\H_{\pi_X(w)}(N)z^N,
\end{eqnarray}
by a suitable theory of local domains which assures to carry over the computation of these Taylor coefficients
and preserves the stuffle indentity, again true for polynomials over the alphabet $Y=\{y_n\}_{n\ge1}$, \textit{i.e.}
\begin{eqnarray}
\forall S,T\in\ncp{\C}{Y},&&\H_{S\stuffle T}=\H_{S}\H_{T},
\end{eqnarray}
meaning that $\H_{\bullet}:(\ncp{\C}{Y},\stuffle,1_{Y^*})\longrightarrow(\C\{\H_w\}_{w\in Y^*},\times,1)$,
mapping any word $w=y_{s_1}\ldots y_{s_r}\in Y^*$ to
\begin{eqnarray}\label{HarmonicSums}
\H_w=\H_{s_1,\ldots,s_r}=\sum_{N\ge n_1>\ldots>n_r>0}\frac1{n_1^{s_1}\ldots n_r^{s_r}},
\end{eqnarray}
is a injective $\stuffle$-morphism \cite{words03}.

\item Extend these correspondences (\textit{i.e.} $\Li_\bullet,\H_\bullet$) to some series (over $X$ and $Y$, respectively)
in order to preserve the identity\footnote{Here $\odot$ stands for the Hadamard product \cite{Had_prod}.} \cite{words03}
\begin{eqnarray}\label{idP}
\frac{\Li_{\pi_X(S)}(z)}{1-z}\odot\frac{\Li_{\pi_X(T)}(z)}{1-z}=\frac{\Li_{\pi_X(S\stuffle T)}(z)}{1-z}.
\end{eqnarray}
true for polynomials $S,T\in\ncp{\C}{Y}$.

\item Taking the definition of polyetas as in \eqref{polylogarithm} at $z=1$ or in \eqref{HarmonicSums} at $+\infty$,
one sees that, for any $s_1>1$, Abel's theorem, one has
\begin{eqnarray}
\zeta(s_1,\ldots,s_r)
&=&\lim_{z\rightarrow1}\Li_{s_1,\ldots,s_r}(z)\\
&=&\lim_{N\rightarrow+\infty}\H_{s_1,\ldots,s_r}(N)\\
&=&\sum_{n_1>\ldots>n_r>0}\frac1{n_1^{s_1}\ldots n_r^{s_r}}.
\end{eqnarray}    
However, this theorem does not hold in the divergent cases. and we will recall some regularization process
based on the computation of a $\stuffle$-character with polynomial values and specialize it to obtain a character \cite{Daresbury,Daresbury1}
\begin{eqnarray}
\gamma_\bullet:(\ncp{\Q}{Y},\stuffle,1_{Y^*})\longrightarrow(\calZ[\gamma],\times,1),
\end{eqnarray}    
where ${\cal Z}:=\mathrm{span}_{\Q}\{\zeta(s_1,\ldots,s_r)\}_{r\ge1,s_1\ge2,s_2,\ldots,s_r\ge1}$.
\item To this end, we use the explicit parametrization of the $\tt conc$-characters obtained in \cite{GHM22,BHN,CM} and the fact that,
under stuffle products, they form a group. We show the linear independence of the Kleene stars $(z^ky_k)^*$ and show that
$\gamma_\bullet$ provides a group morphism between the group of $\tt conc$-characters (endowed with $\stuffle$) and that of Taylor
series $g$ (with radius $R=1$) such that $g(0)=1$. This morphism maps each star $y_k^*$ precisely to 
\begin{eqnarray}
\frac{1}{\Gamma_{y_k}(1+z)}=\exp\biggl(-\sum_{n\ge1}\zeta(kn)\frac{(-z^k)^n}n\biggr),&\mbox{for}&k\ge2.
\end{eqnarray}    
and $y_1^*$ to the classical inverse Gamma:
\begin{eqnarray}
\Gamma_{y_1}^{-1}(1+z)=\Gamma^{-1}(1+z).
\end{eqnarray}    
We will prove that all these ``new'' functions are entire and linearly independant. 
\end{enumerate} 

To summarize, the present work concerns the whole project of extending $\H_{\bullet}$
over a stuffle subalgebra of rational power series on the alphabet $Y$, in particular
the stars of letters and some explicit combinatorial consequences of this extension.

\section{Domains and extensions}
All starts with the (multiindexed) polylogarithm defined, for $|z|<1$, by \eqref{polylogarithm}.
It is (multi-)indexed by a list $(s_1,\ldots,s_r)\in\N_{\ge1}^r$ which can be reindexed by a word 
$x_0^{s_1-1}x_1\ldots x_0^{s_r-1}x_1\in X^*x_1$. From this, introducing two differential forms
\begin{eqnarray}
\omega_0(z)=z^{-1}dz&\mbox{and}&\omega_1(z)=(1-z)^{-1}dz,
\end{eqnarray}
we get an integral representation of the functions \eqref{polylogarithm} as follows\footnote{Given
a word $w\in X^*$,  we note $|w|_{x_1}$ the number of occurrences of $x_1$ within $w$.} \cite{HNM1}
\begin{eqnarray}
\Li_w(z)=\left\{
\begin{array}{lclcl}
1_{\calH(\Omega)}&\mbox{if}&w=1_{X^*}\cr
\displaystyle\int_{0}^z\omega_1(s)\Li_u(s)&\mbox{if}& w=x_1u \cr
\displaystyle\int_{1}^z\omega_0(s)\Li_u(s)&\mbox{if}&w=x_0u&\mbox{and}&|u|_{x_1}=0,w\in x_0^*\cr
\displaystyle\int_{0}^z\omega_0(s)\Li_u(s)&\mbox{if}&w=x_0u&\mbox{and}&|u|_{x_1}>0,w\notin x_0^*,
\end{array}\right.
\end{eqnarray}
where $\Omega$ is the simply connected domain $\C\setminus(]-\infty,0]\cup[1,+\infty[)$,
over which we consider the algebra of analytic functions, $\calH(\Omega)$,
with the neutral element $1_{\calH(\Omega)}$. This provides not only the analytic
continuation of \eqref{polylogarithm} to $\Omega$ but also extends the indexation
to the whole alphabet $X$, allowing to study the complete generating series
\begin{eqnarray}
\L(z)=\sum_{w\in X^*}\Li_w(z)w
\end{eqnarray}
and show that it is the solution of the following first order noncommutative differential equation
\begin{eqnarray}\label{DrinfeldSys}
\left\{
\begin{array}{lcl}
\der(S)=(\omega_0(z)x_0+\omega_1(z)x_1)S,&&(NCDE)\cr
\lim\limits_{z\in\Omega,z\to 0}S(z)e^{-x_0\log(z)}=1_{\ncs{\calH(\Omega)}{X}},
&&\mbox{asymptotic initinial condition,}
\end{array}\right.
\end{eqnarray}
where, for any $S\in\ncs{\calH(\Omega)}{X}$, for term by term derivation, one gets \cite{VD1}
\begin{eqnarray}
\der(S)=\sum_{w\in X^*}\dfrac{d}{dz}(\scal{S}{w})w.
\end{eqnarray}
This differential system allows to show that $\L$ is a $\shuffle$-character \cite{SLC43}, \textit{i.e.}
\begin{eqnarray}
\forall u,v\in X^*,\quad\scal{\L}{u\shuffle v}=\scal{\L}{u}\scal{\L}{v}
&\mbox{and}&\scal{\L}{1_{X^*}}=1_{\calH(\Omega)}.
\end{eqnarray}

Note that, in what precedes, we used the pairing  $\scal{\bullet}{\bullet}$ between series and polynomials,
classically defined by, for $T\in\ncs{\C}{X}$ and $P\in\ncp{\C}{X}$\footnote{Here $R$ is any commutative ring
(like $\calH(\Omega),\C,\calZ[\gamma]$, ...).}  
\begin{eqnarray}
\scal{T}{P}=\sum_{w\in X^*}\scal{T}{w}\scal{P}{w}, 
\end{eqnarray}
where, when $w$ is a word, $\scal{S}{w}$ stands for the coefficient of $w$ in $S$. With this at hand,
we extend  at once the indexation of $\Li$ from $X^*$ to $\ncs{\C}{X}$ by 
\begin{eqnarray}\label{linext}
\Li_{P}:=\sum_{w\in X^*}\scal{P}{w}\Li_w=\sum_{n\ge0}\biggl(\sum_{|w|=n}\scal{P}{w}\Li_w\biggr).
\end{eqnarray}

In \cite{GHM22,BHN,CM}, it has been established that the polylogarithm, well defined locally by \eqref{polylogarithm},
could be extended to some series (with conditions) by the last part of formula \eqref{linext} where the polynomial
$P$ is replaced by some series.
 
As was said previously, we focus here on what happens in the neighbourhood of zero. Therefore, the aim of
this paragraph concerns the two first points of Section \ref{intro}. which we summarize here
\begin{enumerate}
\item Propagate the extension to local Taylor expansions\footnote{Around zero.}
of polylogarithms and the coefficients of their quotients by $1-z$, namely the
harmonic sums, by a suitable theory of local domains. 
\item Extend these correspondences (\textit{i.e.} $\Li_\bullet,\H_\bullet$) to some series in order to
preserve the identity \eqref{idP}.   
\end{enumerate} 

\subsection{Polylogarithms: from global to local domains}
The map $\Li_{\bullet}$ in general has been extended to a subdomain of $\ncs{\C}{X}$,
called $\Dom(\Li_{\bullet})$ (see \cite{GHM22,BHN,CM}). It is the set of series
\begin{eqnarray}
S=\sum\limits_{n\ge0}S_n,&\mbox{where}&S_n:=\sum\limits_{|w|=n}\scal{S}{w}
\end{eqnarray}
such that $\sum\limits_{n\ge0}\Li_{S_n}$ is unconditionally
convergent for the standard topology on $\calH(\Omega)$ \cite{Sch}.

\begin{example}[\cite{HNM1}]
For example, the classical polylogarithms: dilogarithm $\Li_2$, trilogarithm $\Li_3$,
etc... are defined and obtained through this coding by 
\begin{eqnarray*}
\Li_k(z)=\sum_{n\ge1}\frac{z^n}{n^k}=\Li_{x_0^{k-1}x_1}(z)=\scal{\L(z)}{x_0^{k-1}x_1}
\end{eqnarray*}
but for $t\ge0$ (real), the series $(tx_0)^*x_1$ belongs to $\Dom(\Li_\bullet)$ iff $0\le t<1$.
\end{example}

\begin{center}
\begin{tikzpicture}
\def\radius{2cm}
\def\mycolorbox#1{\textcolor{#1}{\rule{2ex}{2ex}}}
\colorlet{colori}{blue!60}
\colorlet{colorii}{red!60}
%
\coordinate (ceni);
\coordinate[xshift=.9\radius] (ceniii);
\coordinate[xshift=\radius] (cenii);

\draw (ceni) circle (\radius);
\draw (cenii) circle (\radius);
\draw (ceniii) circle (0.3\radius);

\draw  ([xshift=-15pt,yshift=15pt]current bounding box.north west) 
  rectangle ([xshift=20pt,yshift=-20pt]current bounding box.south east);

\node[xshift=-.9\radius] at (ceni) {$\Dom(\Li)$};
\node[xshift=.9\radius] at (cenii) {$\ncs{\C^{\mathrm{rat}}}{X}$};
\node[xshift=.9\radius] at (ceni) {$\calA$};
\node[xshift=-30pt,yshift=\radius+5pt] at (ceni) {$\ncs{\C}{X}$};
\end{tikzpicture}
\end{center}
\begin{center}
Above $\calA=\ncp{\C}{X}\shuffle\ncs{\C^\mathrm{rat}}{X}$
and $\ncs{\C^{\mathrm{rat}}}{X}$\\
is the set of rational series \cite{GHM22,BHN,CM}.
\end{center}
\medskip

This definition has many merits\footnote{As the fact that, due to special properties of $\calH(\Omega)$
(it is a nuclear space \cite{Sch}, see details in \cite{GHM22,BHN,CM}), one can show that
$\Dom(\Li)$ is closed by shuffle products.} and can easily be adapted to arbitrary (open and connected)
domains. But this definition, based on a global condition of a fixed domain $\Omega$, does not provide
a sufficiently clear  interpretation of the stable symbolic computations around a point, in particular
at $z=0$. One needs to consider a sort of ``symbolic local germ'' worked out explicitely. Indeed,
as the harmonic sums (or MZV) are the coefficients of the Taylor expansion at zero of the convergent
polylogarithms divided by $1-z$, we only need to know locally these functions. In order to gain more
indexing series and to describe the local situation at zero, we reshape and define a new domain of $\Li$
around zero to $\Dom^{\mathrm{loc}}(\Li_{\bullet})$. The first step will be provided by the following theorem.
\begin{theorem}\label{th1}
Let $S\in \ncs{\C}{X}x_1\oplus \C1_{X^*}$ such that
\begin{eqnarray*}
S=\sum_{n\ge0}[S]_n&\mbox{where}&[S]_n=\sum_{w\in X^*,\abs{w}=n}\scal{S}{w}w,
\end{eqnarray*}
($[S]_n$ are the homogeneous components of $S$), we suppose that $0<R\le1$
and that $\sum\limits_{n\ge0}\Li_{[S]_n}$ is unconditionally convergent
(for the standard topology) within the open disk $\abs{z}<R$.
Remarking that $\dfrac{1}{1-z}\sum\limits_{n\ge0}\Li_{[S]_n}(z)$
is unconditionally convergent in the same domain, we set 
\begin{eqnarray*}
\frac{1}{1-z}\sum_{n\ge0}\Li_{[S]_n}(z)=\sum_{N\ge0}a_Nz^N.
\end{eqnarray*}
Then, for all $N\ge0$,
\begin{eqnarray*}
\sum\limits_{n\ge0}\H_{\pi_Y([S]_n)}(N)=a_N.
\end{eqnarray*}
\end{theorem}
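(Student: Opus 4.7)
The strategy rests on three ingredients: (i) the single-word identity relating $\Li_w/(1-z)$ to the generating series of the associated harmonic sums, (ii) the fact that each homogeneous component $[S]_n$ is in fact a polynomial, so this identity extends to $[S]_n$ by linearity, and (iii) the topological interchange of the infinite sum $\sum_n$ with the operation of Taylor coefficient extraction at $0$.

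First I would recall, as already set up in Section \ref{intro}, that for every $w\in X^*x_1\cup\{1_{X^*}\}$,
\[
\frac{\Li_w(z)}{1-z}=\sum_{N\ge 0}\H_{\pi_Y(w)}(N)\,z^N\qquad(|z|<1),
\]
with the conventions $\Li_{1_{X^*}}=1$ and $\H_{1_{Y^*}}(N)=1$. Because the support of $S$ lies in $X^*x_1\cup\{1_{X^*}\}$, each homogeneous component $[S]_n$ is a \emph{finite} $\C$-linear combination of such words, hence a polynomial; applying $\pi_Y$ yields a well-defined element of $\ncp{\C}{Y}$, and by linearity
\[
\frac{\Li_{[S]_n}(z)}{1-z}=\sum_{N\ge 0}\H_{\pi_Y([S]_n)}(N)\,z^N.
\]

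Next I would verify that $\sum_{n\ge 0}\Li_{[S]_n}/(1-z)$ is unconditionally convergent on the disk $|z|<R$ in the compact-open topology of $\calH(|z|<R)$. Since $R\le 1$, the function $1/(1-z)$ is holomorphic on this disk and multiplication by it defines a continuous endomorphism of $\calH(|z|<R)$; a continuous linear map preserves unconditional summability, so the hypothesis on $\sum_n \Li_{[S]_n}$ transfers at once, with sum $\frac{1}{1-z}\sum_n\Li_{[S]_n}(z)=\sum_{N\ge 0}a_N z^N$.

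Finally, for each $N\ge 0$, the Taylor coefficient functional $f\mapsto [z^N]f$ is a continuous linear form on $\calH(|z|<R)$ (expressible as a Cauchy integral over a circle of radius $r<R$), and continuous linear functionals commute with unconditional sums. Therefore
\[
a_N=[z^N]\Bigl(\tfrac{1}{1-z}\sum_{n\ge 0}\Li_{[S]_n}(z)\Bigr)=\sum_{n\ge 0}[z^N]\frac{\Li_{[S]_n}(z)}{1-z}=\sum_{n\ge 0}\H_{\pi_Y([S]_n)}(N),
\]
which is the asserted identity. The only delicate point is the topological justification of the last exchange: it uses the fact that $\calH(|z|<R)$, being a nuclear Fréchet space, makes unconditional convergence stable under every continuous linear operation (the same circle of ideas already used in \cite{GHM22,BHN,CM} to define $\Dom(\Li_\bullet)$ and show its stability by shuffle); once this is granted, the rest is bookkeeping with the definitions of $\pi_Y$ and of the harmonic sums $\H_{\bullet}$.
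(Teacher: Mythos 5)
Your proposal is correct and follows essentially the same route as the paper's proof of Theorem \ref{th1}: reduce to the single-word identity $\frac{\Li_w(z)}{1-z}=\sum_{N\ge0}\H_{\pi_Y(w)}(N)z^N$ (extended by linearity to each polynomial component $[S]_n$) and then exchange the sum over $n$ with extraction of the $N$-th Taylor coefficient, using the unconditional-convergence hypothesis. The only difference is the justification of that exchange: the paper rearranges the absolutely convergent multiple series directly (unconditional and absolute convergence coinciding in this nuclear space), whereas you invoke continuity of the coefficient functional via Cauchy's formula together with stability of unconditional sums under continuous linear maps --- a device the paper itself employs in the proof of Theorem \ref{PropDomR}, and for which nuclearity is in fact not even needed.
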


\begin{proof} 
Let us recall that, for any $w \in X^*$, the function $(1-z)^{-1}\Li_w(z)$ is analytic in the open disk $|z|<R$. Moreover, one has 
\begin{eqnarray*}
\frac{1}{1-z}\Li_w(z)=\sum_{N\ge0}\H_{\pi_Y(w)}(N)z^N.
\end{eqnarray*}
Since $[S]_n=\sum\limits_{w\in X^*,\abs{w}=n}\scal{S}{w}w$ and $(1-z)^{-1}\sum\limits_{n\ge0}\Li_{[S]_n}$ 
absolutely converges (for the standard topology\footnote{For this topology, unconditional
and absolute convergence coincide \cite{Sch}}) within the open disk $|z|<R$, one obtains 
\begin{eqnarray*}
\frac{1}{1-z}\sum_{n\ge0}\Li_{[S]_n}(z)
&=&\frac{1}{1-z}\sum_{n\ge0}\sum_{w\in X^*,\abs{w}=n}\scal{S}{w}w\Li_w(z)\cr
&=&\sum_{n\ge0}\sum_{w\in X^*,\abs{w}=n}\scal{S}{w}w\frac{\Li_w(z)}{1-z}\cr
&=&\sum_{n\ge0}\sum_{w\in X^*,\abs{w}=n}\scal{S}{w}w\sum_{N\ge0}\H_{\pi_Y(w)}(N)z^N\cr
&=&\sum_{N\ge0}\sum_{n\ge0}\sum_{w\in X^*,\abs{w}=n}\scal{S}{w}w\H_{\pi_Y(w)}(N)z^N\cr
&=&\sum_{N\ge0}\H_{\pi_Y([S]_n)}(N)z^N.
\end{eqnarray*}
This implies that, for any $N\ge0$,
\begin{eqnarray*}
a_N=\sum_{n\ge0}\H_{\pi_Y([S]_n)}(N).
\end{eqnarray*}
\end{proof}

We will need the following combinatorial

\begin{lemma}\label{surj_comb}
For a letter ``$a$'', one has 
\begin{eqnarray}\label{shuffle_pow}
\abs{(a^+)^{\shuffle m}}{a^n}=m!S_2(n,m)
\end{eqnarray}
($S_2(n,m)$ being the Stirling numbers of the second kind).
The exponential generating series of R.H.S. in equation \eqref{shuffle_pow} (w.r.t. $n$) is given by
\begin{eqnarray}
\label{12fold1}
\sum_{n\ge0} m! S_2(n,m)\frac{x^n}{n!}=(e^x-1)^m.
\end{eqnarray}
\end{lemma}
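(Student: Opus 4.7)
My approach would split the lemma into its two assertions and exploit the single-letter nature of the alphabet, which makes the shuffle algebra commutative and essentially reduces to manipulating exponential generating functions.

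First I would unpack the shuffle power combinatorially. On a one-letter alphabet one has the basic identity $a^p \shuffle a^q = \binom{p+q}{p}a^{p+q}$, and by iteration
\begin{eqnarray*}
a^{k_1}\shuffle\cdots\shuffle a^{k_m}=\binom{k_1+\cdots+k_m}{k_1,\ldots,k_m}\,a^{k_1+\cdots+k_m}.
\end{eqnarray*}
Expanding $(a^+)^{\shuffle m}$ by multilinearity and taking the coefficient of $a^n$ therefore yields
\begin{eqnarray*}
\scal{(a^+)^{\shuffle m}}{a^n}=\sum_{\matop{k_1,\ldots,k_m\ge1}{k_1+\cdots+k_m=n}}\binom{n}{k_1,\ldots,k_m},
\end{eqnarray*}
which is the classical expression counting surjections from an $n$-set onto an $m$-set. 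By the well-known combinatorial identity, this number equals $m!\,S_2(n,m)$, proving \eqref{shuffle_pow}.

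For the exponential generating series \eqref{12fold1}, the cleanest route is to observe that on a single-letter alphabet the map $a^n\mapsto x^n/n!$ realises an isomorphism of algebras $(\ncp{\C}{\{a\}},\shuffle,1_{\{a\}^*})\longrightarrow(\C[x],\cdot,1)$, since $(x^p/p!)(x^q/q!)=\binom{p+q}{p}x^{p+q}/(p+q)!$ matches $a^p\shuffle a^q$. Under this isomorphism (extended formally to series), $a^+$ corresponds to $\sum_{k\ge1}x^k/k!=e^x-1$, and so $(a^+)^{\shuffle m}$ corresponds to $(e^x-1)^m$. Reading off the coefficient of $x^n/n!$ on both sides recovers \eqref{12fold1} immediately and furthermore gives an independent confirmation of \eqref{shuffle_pow}.

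There is no real obstacle here: both assertions are routine once the isomorphism $a^n\leftrightarrow x^n/n!$ is in place. The only point requiring a line of care is that $a^+$ is an infinite series rather than a polynomial, so the isomorphism has to be invoked at the level of the completed algebras (or, equivalently, one works coefficient by coefficient at each fixed degree $n$, where only finitely many terms contribute).
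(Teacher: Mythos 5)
Your proof is correct, but it does not follow the paper's route. For \eqref{shuffle_pow} the paper does not expand multinomial coefficients: it ``deshuffles'' the power as $a_1^+\shuffle a_2^+\shuffle\cdots\shuffle a_m^+$ over $m$ \emph{distinct} letters, observes that the length-$n$ words occurring there are precisely the surjections $[1\ldots n]\to[1\ldots m]$ (each letter must occur at least once), and then specializes $a_j\mapsto a$; your computation via $a^p\shuffle a^q=\binom{p+q}{p}a^{p+q}$ and the identity $\sum\binom{n}{k_1,\ldots,k_m}=m!\,S_2(n,m)$ (sum over compositions of $n$ into $m$ positive parts) reaches the same count, at the price of invoking that classical identity instead of exhibiting the bijection directly. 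For \eqref{12fold1} the paper simply cites the twelvefold-way formula from Stanley, whereas you derive it from the algebra isomorphism $a^n\mapsto x^n/n!$ between the one-letter shuffle algebra and $(\C[x],\cdot)$, under which $a^+\mapsto e^x-1$; this is self-contained and explains structurally why $(e^x-1)^m$ appears, and it is essentially the same mechanism the paper exploits later when it majorizes $\Li_{x_1^n}$ by $\Li_{x_1}^n/n!$ in the proofs of Proposition \ref{lem2} and Theorem \ref{PropDomR}, so it fits the context well. One small caveat: your closing claim that reading off coefficients gives an ``independent confirmation'' of \eqref{shuffle_pow} holds only modulo already knowing that $(e^x-1)^m$ is the exponential generating function of the surjection numbers $m!\,S_2(n,m)$ --- the very formula the paper cites --- so as stated it is a consistency check rather than a second proof; this does not affect the validity of your argument for the lemma itself.
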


\begin{proof}$(a^+)^{\shuffle m}$ is the specialization of 
\begin{eqnarray*}
L_m=a_1^+\shuffle a_2^+\shuffle\ldots\shuffle a_m^+
\end{eqnarray*}
to $a_j\to a$ (for all $j=1,2\ldots m$). The words of $L_m$ are in bijection with the surjections 
$[1\ldots n]\to [1\ldots m]$, therefore the coefficient $\left\langle  (a^+)^{\shuffle m} |  a^n \right\rangle $ is exactly 
the number of such surjections namely $m!S_2(n,m)$. A classical formula\footnote{See \cite{St1},
the twelvefold way, formula (1.94b)(pp. 74) for instance.} says that  
\begin{eqnarray}
\label{12fold2}
\sum_{n\ge0} m! S_2(n,m)\frac{x^n}{n!}=(e^x-1)^m.
\end{eqnarray}
\end{proof}

To prepare the construction of the ``symbolic local germ'' around zero, let us set, in the same manner as in \cite{GHM22,BHN,CM},
\begin{eqnarray}
\Dom_R(\Li)&:=&\{S\in\ncs{\C}{X}x_1\oplus \C1_{X^*}\vert\cr
&&\sum_{n\ge0}\Li_{[S]_n} \mbox{is unconditionally convergent in $\calH(D_{<R})$}\}
\end{eqnarray}
and prove the following:

\begin{proposition}\label{lem2}
With the notations as above, we have: 
\begin{enumerate}
\item The map $]0,1]\to \ncs{\C}{X}$ given by $R\mapsto\Dom_R(\Li)$ is strictly decreasing
\item Each $\Dom_R(\Li)$ is a shuffle subalgebra of $\CXX$.
\end{enumerate}
\end{proposition}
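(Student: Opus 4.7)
I would treat (1) and (2) separately. For monotonicity in (1), assume $0<R_1<R_2\le 1$ and $S\in\Dom_{R_2}(\Li)$. Since the standard topology on $\calH(D_{<R})$ is that of uniform convergence on compacts and since every compact of $D_{<R_1}$ sits inside $D_{<R_2}$, restriction preserves unconditional convergence of $\sum_{n\ge 0}\Li_{[S]_n}$, giving $S\in\Dom_{R_1}(\Li)$. For strictness I would exhibit, for each pair $R_1<R_2$, the explicit witness
\begin{eqnarray*}
S:=\sum_{n\ge 0}\frac{n!}{\ell(R_1)^n}\,x_1^n\in\ncs{\C}{X}x_1\oplus\C 1_{X^*},\qquad\ell(R):=-\log(1-R).
\end{eqnarray*}
Since $\Li_{x_1^n}(z)=(-\log(1-z))^n/n!$ (from the shuffle identity $\Li_{x_1}^{n}=n!\,\Li_{x_1^n}$), one has
\begin{eqnarray*}
\sum_{n\ge 0}\Li_{[S]_n}(z)=\sum_{n\ge 0}\left(\frac{-\log(1-z)}{\ell(R_1)}\right)^n,
\end{eqnarray*}
a geometric series that converges normally on every compact of $D_{<R_1}$ (because $|-\log(1-z)|\le\ell(|z|)<\ell(R_1)$ there) but whose value at $z=R_1\in D_{<R_2}$ is the divergent $\sum_{n\ge 0}1$. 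Hence $S\in\Dom_{R_1}(\Li)\setminus\Dom_{R_2}(\Li)$.

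For (2), I would first observe that $\ncs{\C}{X}x_1\oplus\C 1_{X^*}$ is already closed under $\shuffle$: the last letter of any word occurring in a shuffle $u\shuffle v$ is the last letter of $u$ or of $v$, so two factors supported on $X^*x_1\cup\{1_{X^*}\}$ shuffle into a combination of words of the same shape. For the convergence requirement, let $S_1,S_2\in\Dom_R(\Li)$; since $\shuffle$ preserves length, $[S_1\shuffle S_2]_n=\sum_{k=0}^n[S_1]_k\shuffle[S_2]_{n-k}$, and the shuffle-morphism identity \eqref{shuffle}, applied term by term, gives
\begin{eqnarray*}
\Li_{[S_1\shuffle S_2]_n}=\sum_{k=0}^n\Li_{[S_1]_k}\,\Li_{[S_2]_{n-k}}.
\end{eqnarray*}
For any compact $K\subset D_{<R}$ the sup-seminorm $\|\cdot\|_K$ is submultiplicative and the usual Cauchy-product estimate yields
\begin{eqnarray*}
\sum_{n\ge 0}\|\Li_{[S_1\shuffle S_2]_n}\|_K\le\Bigl(\sum_{n\ge 0}\|\Li_{[S_1]_n}\|_K\Bigr)\Bigl(\sum_{n\ge 0}\|\Li_{[S_2]_n}\|_K\Bigr)<\infty,
\end{eqnarray*}
the two factors on the right being finite by the standard equivalence (recalled in the footnote to Theorem \ref{th1}) between unconditional and seminorm-wise absolute convergence in the nuclear Fr\'echet space $\calH(D_{<R})$. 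Re-invoking this equivalence in the reverse direction then yields the desired unconditional convergence of $\sum_{n\ge 0}\Li_{[S_1\shuffle S_2]_n}$, hence $S_1\shuffle S_2\in\Dom_R(\Li)$.

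The main delicate point I anticipate is precisely this use of nuclearity: in a general Fr\'echet space the Cauchy product of two unconditionally summable families need not be unconditionally summable, so the argument genuinely relies on the equivalence \emph{unconditional $=$ seminorm-wise absolute} valid in $\calH(D_{<R})$. Once that equivalence is on the table, the rest is book-keeping, and the strictness witness for (1) is straightforward once one thinks of the star of $x_1$.
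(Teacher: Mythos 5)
Your proposal is correct, and the interesting divergence from the paper is in the strictness witness of point (1). The paper takes $S(t)=\sum_{m\ge0}t^m(x_1^+)^{\shuffle m}$ and needs the combinatorial Lemma on shuffle powers of $a^+$ (surjections, Stirling numbers $S_2(n,m)$, the generating function $(e^x-1)^m$) to compute $\sum_{n\ge0}\Li_{[S(t)]_n}(z)=\frac{1-z}{1-(t+1)z}$ and to bound the sum of moduli, after which $t$ is tuned so that the divergence threshold $(t+1)^{-1}$ falls in $[R_1,R_2[$. Your witness $S=\sum_{n\ge0}n!\,\ell(R_1)^{-n}x_1^n$ (the shuffle-geometric series in $x_1/\ell(R_1)$) bypasses that lemma entirely: since $\Li_{x_1^n}=(-\log(1-z))^n/n!$, the image is a genuine geometric series in $-\log(1-z)/\ell(R_1)$, normally convergent on compacts of $D_{<R_1}$ because $\abs{\log(1-z)}\le-\log(1-\abs{z})$, and pointwise divergent at $z=R_1\in D_{<R_2}$ since the terms there equal $1$; evaluation being continuous on $\calH(D_{<R_2})$, this rules out membership in $\Dom_{R_2}(\Li)$. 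This is simpler and pins the failure exactly at $R_1$, at the price of not producing the rational closed form $\frac{1-z}{1-(t+1)z}$ which the paper reuses as an illustrative example; also note your $S$ is the shuffle star of $x_1/\ell(R_1)$, not the Kleene (concatenation) star, so the phrase ``star of $x_1$'' should be read in the shuffle sense. For point (2) you follow essentially the paper's (and \cite{GHM22}'s) argument: closure of $\ncs{\C}{X}x_1\oplus\C 1_{X^*}$ under $\shuffle$, the length-graded Cauchy product $[S_1\shuffle S_2]_n=\sum_{k}[S_1]_k\shuffle[S_2]_{n-k}$, the seminorm estimate, and the nuclearity-based equivalence of unconditional and seminorm-wise absolute convergence, which is exactly the footnoted fact the paper itself invokes; your remark that only the direction ``unconditional $\Rightarrow$ absolute'' needs nuclearity is accurate. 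The only (trivial) omissions are that a subalgebra also requires stability under linear combinations and the unit $1_{X^*}\in\Dom_R(\Li)$, which the paper states explicitly.
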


\begin{proof}
\begin{enumerate}
\item It is straightforward that he map $R \longmapsto\Dom_R(\Li)$ is decreasing. Set now, with $x_1^+=x_1x_1^*=x_1^*-1$,
\begin{eqnarray*}
S(t)=\sum\limits_{m\ge0}t^m(x_1^+)^{\shuffle m}
\end{eqnarray*}
and let $[S]_n(t)$ be its homogeneous components, we have
\begin{eqnarray*}
\sum_{n\ge0}\Li_{[S]_n(t)}(z)=\frac{1-z}{1-(t+1)z}.
\end{eqnarray*}
For $0<R_1<R_2\leq 1$ it is straightforward that
\begin{eqnarray*}
\Dom_{R_2}(\Li)\subset\Dom_{R_1}(\Li).
\end{eqnarray*}
Let us prove that the inclusion is strict. 

Take $\abs{z}<1$ and let us, be it finite or infinite, evaluate the sum 
\begin{eqnarray*}
M(z)=\sum_{n\ge0}\abs{\Li_{[S]_n(t)}(z)}=\sum_{n\ge0}\scal{S(t)}{x_1^n}\abs{\Li_{x_1^n}(z)}
\end{eqnarray*}
then
\begin{eqnarray*}
M(z)&=&\sum_{n\ge0}\abs{S(t)}{x_1^n}\abs{\Li_{x_1^n}(z)}\cr 
&=&\sum_{n\ge0}\sum_{m\ge0}\abs{t^m(x_1^+)^{\shuffle m}}{x_1^n}\abs{\Li_{x_1^n}(z)}\cr
&=&\sum_{m\ge0}m!t^m\sum_{n\ge0}S_2(n,m)\frac{\abs{\Li_{x_1}(z)}^n}{n!}\cr
&\le&\sum_{m\ge0}m!t^m\sum_{n\ge0}S_2(n,m)\frac{\Li^n_{x_1}(\abs{z})}{n!},
\end{eqnarray*}
due to the fact that $\abs{\Li_{x_1}(z)}\le\Li_{x_1}(\abs{z})$ (Taylor series with positive coefficients).
Finally, in view of equation (\ref{12fold2}), we get, on the one hand, for $\abs{z}<(t+1)^{-1}$, 
\begin{eqnarray*}
M(z)\le\sum_{m\ge0}t^m(e^{\Li_{x_1}(\abs{z})}-1)^m 
=\sum_{m\ge0}t^m(\frac{\abs{z}}{1-\abs{z}})^m=\frac{1-\abs{z}}{1-(t+1)\abs{z}}.
\end{eqnarray*}
This proves that, for all $r\in ]0,\dfrac{1}{t+1}[$, 
\begin{eqnarray*}
\sum_{n\ge0}\absv{\Li_{[S]_n(t)}(z)}_r<+\infty.
\end{eqnarray*}
On the other hand, if $(t+1)^{-1}\le\abs{z}<1$, one has $M(|z|)=+\infty$,
and the preceding calculation shows that, with $t$ choosen such that
\begin{eqnarray*}
0\le\dfrac{1}{R_2}-1<t<\dfrac{1}{R_1}-1,
\end{eqnarray*}
we have $S(t)\in\Dom_{R_1}(\Li)$ but $S(t)\notin\Dom_{R_2}(\Li)$ whence, for $0<R_1<R_2\le1$, 
$\Dom_{R_2}(\Li) \subsetneq\Dom_{R_1}(\Li)$.

\item  One has (proofs as in \cite{GHM22})
\begin{enumerate}
\item $1_{X^*}\in\Dom_R(\Li)$ (because $1_{X^*}\in\ncp{\C}{X}$) and $\Li_{1_{X^*}}=1_{\calH(\Omega)}$.
\item Taking $S,T\in\Dom_R(\Li)$ we have, by absolute convergence, $S\shuffle T\in\Dom_R(\Li)$.
It is easily seen that $S\shuffle T\in\ncs{\C}{X}x_1\oplus \C1_{X^*} $ and, moreover, that 
$\Li_S\Li_T=\Li_{S\shuffle T}$\footnote{Proof by absolute convergence as in \cite{GHM22}.}. 
\end{enumerate}
\end{enumerate}
\end{proof} 

In Theorem \ref{PropDomR} bellow, we study, for series taken in $\ncs{\C}{X}x_1\oplus\C.1_{X^*}$, the correspondence 
$\Li_\bullet$ to some $\calH(D_{<R})$, first (point 1) establishes its surjectivity (in a certain sense) and then
(points 2 and 3) examine the relation between summability of the functions and that of their Taylor coefficients.
For that, let us begin with a very general lemma on sequences of Taylor series which adapts, for our needs,
the notion of \textit{normal families} \cite{PM}.

\begin{lemma}\label{TaylSeq}
Let $\tau=(a_{n,N})_{n,N\ge0}$ be a double sequence of complex numbers. Setting
\begin{eqnarray*}
I(\tau):=\{r\in]0,+\infty[\vert\sum_{n,N\ge0}|a_{n,N}r^N|<+\infty\},
\end{eqnarray*}
one has
\begin{enumerate} 
\item $I(\tau)$ is an interval of $]0,+\infty[$, it is not empty iff there exists $z_0\in\C\setminus\{0\}$ such that 
\begin{eqnarray}\label{NonZeroRad}
\sum_{n,N\ge0}|a_{n,N}z_0^N|<+\infty
\end{eqnarray}
In this case, we set $R(\tau):=\sup(I(\tau))$, one has 
\begin{enumerate}
\item For all $N$, the series $\sum\limits_{n\ge0}a_{n,N}$ converges absolutely (in $\C$). 
Let us note $a_N$ - with one subscript - its limit
\item For all $n$, the convergence radius of the Taylor series 
\begin{eqnarray*}
T_n(z)=\sum_{N\ge0}a_{n,N}z^N
\end{eqnarray*}
is at least $R(\tau)$ and $\sum\limits_{n\in\N}T_n$ is summable for the standard
topology of $\calH(D_{<R(\tau)})$ with sum $T(z)=\sum\limits_{n,N\ge0}a_{N}z^N$.
\end{enumerate}

\item Conversely, we suppose that it exists $R>0$ such that 
\begin{enumerate}
\item Each Taylor series $T_n(z)=\sum\limits_{N\ge0}a_{n,N} z^N$ converges in $\calH(D_{<R})$.
\item The series $\sum\limits_{n\in\N}T_n$ converges unconditionnally in $\calH(D_{<R})$.
\end{enumerate}
Then $I(\tau)\not=\emptyset$ and $R(\tau)\ge R$.
\end{enumerate}
\end{lemma}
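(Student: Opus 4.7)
The plan is to handle the two directions separately. Part 1 rests on Fubini--Tonelli applied to the absolutely convergent double series $\sum_{n,N} a_{n,N}\,z^N$ (for $|z|$ bounded by some radius in $I(\tau)$), combined with a normal-convergence estimate on compact disks. Part 2 relies on Cauchy's inequality for each holomorphic $T_n$ together with the nuclearity of $\calH(D_{<R})$.

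For Part 1, note that $r \mapsto |a_{n,N}|\,r^N$ is non-decreasing in $r$ for each $(n,N)$, so $I(\tau)$ is automatically a sub-interval of $]0,+\infty[$ with lower endpoint $0$. Non-emptiness is equivalent to \eqref{NonZeroRad}, since $|a_{n,N}\,z_0^N| = |a_{n,N}|\,|z_0|^N$. Fixing $r \in I(\tau)$, the bound $\sum_n |a_{n,N}| \le r^{-N} \sum_{n,M} |a_{n,M}|\,r^M$ gives assertion (a). For (b), given $|z| < R(\tau)$ choose $r \in I(\tau)$ with $|z| < r$; absolute convergence of $\sum_{n,N} a_{n,N}\,z^N$ then justifies any rearrangement by Fubini--Tonelli, which yields both that each $T_n$ has radius of convergence at least $R(\tau)$ and that $\sum_n T_n(z) = \sum_N a_N\,z^N$. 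For any $r' \in ]0,R(\tau)[$ pick $r \in I(\tau)$ with $r' \le r$; the triangle-inequality majorant
\[
\sup_{|z|\le r'}|T_n(z)| \le \sum_{N\ge 0} |a_{n,N}|\,r'^N
\]
is summable in $n$ with total at most $\sum_{n,N}|a_{n,N}|\,r^N<+\infty$. This gives normal convergence of $\sum_n T_n$ on every closed subdisk of $D_{<R(\tau)}$, hence summability for the compact-open (Fréchet) topology of $\calH(D_{<R(\tau)})$.

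For Part 2, assume $\sum_n T_n$ converges unconditionally in $\calH(D_{<R})$. Since $\calH(D_{<R})$ is a nuclear Fréchet space (a property already exploited in the paper, cf.\ \cite{Sch}), unconditional summability coincides with absolute summability of every continuous seminorm; in particular, for each $r_1 < R$,
\[
\sum_{n\ge 0} \sup_{|z|\le r_1}|T_n(z)| < +\infty.
\]
Given $r_0 \in ]0,R[$, pick $r_1 \in ]r_0,R[$; Cauchy's inequality applied to $T_n$ on the disk of radius $r_1$ gives $|a_{n,N}| \le \sup_{|z|\le r_1}|T_n(z)|\,r_1^{-N}$, whence
\[
\sum_{n,N\ge 0}|a_{n,N}|\,r_0^N \le \Bigl(\sum_{n\ge 0}\sup_{|z|\le r_1}|T_n(z)|\Bigr)\sum_{N\ge 0}(r_0/r_1)^N < +\infty.
\]
Hence $r_0 \in I(\tau)$, and letting $r_0 \nearrow R$ yields $R(\tau) \ge R$.

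The one delicate step is the nuclearity invocation at the start of Part 2: upgrading unconditional convergence in $\calH(D_{<R})$ to summability of every compact-open seminorm. Once that bridge is crossed, the rest reduces to standard manipulations of absolutely convergent double series and Cauchy estimates, entirely parallel to what the paper uses elsewhere in handling $\Dom(\Li)$ and $\Dom_R(\Li)$.
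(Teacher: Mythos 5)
Your proof is correct and follows essentially the same route as the paper: monotone comparison in $r$ for Part 1, and for Part 2 the Cauchy coefficient estimate $|a_{n,N}|\le \absv{T_n}_{K}r_1^{-N}$ summed against a geometric series, with unconditional convergence in $\calH(D_{<R})$ upgraded to summability of the compact-open seminorms exactly as the paper does (it invokes the same coincidence of unconditional and absolute convergence via \cite{Sch}). Your treatment is in fact slightly more explicit than the paper's on the seminorm summability in Part 1(b), but the substance is identical.
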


\begin{proof}
\begin{enumerate}
\item The fact that $I(\tau)\subset]0,+\infty[$ is straightforward from the definition.
If it exists $z_0\in \C$ such that
\begin{eqnarray*}
\sum_{n,N\ge0}\abs{a_{n,N}z_0^N}<+\infty
\end{eqnarray*}
then, for all $r\in]0,|z_0|[$, we have 
\begin{eqnarray*}
\sum_{n,N\ge0}\abs{a_{n,N}r^N}
=\sum_{n,N\ge0}\abs{a_{n,N}z_0^N}\biggl(\frac{r}{\abs{z_0}}\biggr)^N
\le\sum_{n,N\ge0}|a_{n,N}z_0^N|<+\infty
\end{eqnarray*}
in particular $I(\tau)\not=\emptyset$ and it is an interval of $]0,+\infty[$ with lower bound zero.     
\begin{enumerate}
\item Take $r\in I(\tau)$ (hence $r\not=0$) and $N\in \N$, then we get the expected result as
\begin{eqnarray*}
r^N\sum_{n\ge0}\abs{a_{n,N}}=\sum_{n\ge0}\abs{a_{n,N}r^N}\le\sum_{n,N\ge0}\abs{a_{n,N}r^N}<+\infty.
\end{eqnarray*}
\item Again, take any $r\in I(\tau)$ and $n\in\N$, then 
\begin{eqnarray*}
\sum\limits_{N\ge0}\abs{a_{n,N}r^N}<+\infty
\end{eqnarray*}
which proves that $R(T_n)\ge r$, hence the result\footnote{For a Taylor series $T$,
we note $R(T)$ the radius of convergence of $T$.}. We also have 
\begin{eqnarray*}
\abs{\sum_{N\ge0}a_Nr^N}\le\sum_{N\ge0}r^N\abs{\sum_{n\ge0}a_{n,N}}\le\sum_{n,N\ge0}\abs{a_{n,N}r^N}<+\infty
\end{eqnarray*}
and this proves that $R(T)\ge r$, hence $R(T)\ge R(\tau)$.
\end{enumerate}
\item Let $0<r<r_1<R$ and consider the path $\gamma(t)=r_1e^{2i\pi t}$, we have
\begin{eqnarray*}
\abs{a_{n,N}}=\abs{\frac{1}{2i\pi}\int_\gamma\frac{T_n(z)}{z^{N+1}}dz}\le 
\frac{2\pi}{2\pi}\frac{r_1\absv{T_n}_{K}}{r_1^{N+1}}\le\frac{\absv{T_n}_K}{r_1^{N}} 
\end{eqnarray*} 
with $K=\gamma([0,2\pi])$, hence
\begin{eqnarray*}
\sum_{n,N\ge0}\abs{a_{n,N}r^N}\le\sum_{n,N\ge0}\abs{T_n}_K(\frac{r}{r_1})^N\le\frac{r_1}{r_1-r}\sum_{n\ge0}\absv{T_n}_K<+\infty.
\end{eqnarray*}
\end{enumerate}
\end{proof}
 
\begin{theorem}\label{PropDomR}
\begin{enumerate} 
\item Let $T(z)=\sum\limits_{N\ge0}a_Nz^N$ be a Taylor series \textit{i.e.} such that 
\begin{eqnarray*}
\limsup_{N\to+\infty}\abs{a_N}^{1/n}=B<+\infty,
\end{eqnarray*}
then the series 
\begin{eqnarray}\label{preim_series1}
S=\sum_{N\ge0}a_N(-(-x_1)^+)^{\shuffle N}
\end{eqnarray} 
is summable (see \cite{NRSA}) in $\ncs{\C}{X}$ (with sum in $\ncs{\C}{x_1}$),
$S\in\Dom_R(\Li)$ with $R=(B+1)^{-1}$ and $\Li_S=T$.

\item Let $S\in\Dom_{R}(\Li)$ and $S=\sum\limits_{\ge0}[S]_n$ (homogeneous decomposition), we 
define $N\longmapsto\H_{\pi_Y(S)}(N)$ by\footnote{This definition is compatible with the old one 
when $S$ is a polynomial.} 
\begin{eqnarray*}
\frac{\Li_S(z)}{1-z}=\sum_{N\ge0}\H_{\pi_{Y}(S)}(N)z^N.
\end{eqnarray*}
\item\label{absconv1} Moreover,
\begin{eqnarray}\label{abs_sum0}
\forall r\in]0,R[,&&\sum\limits_{n,N\ge0}\abs{\H_{\pi_Y([S]_n)}(N)r^N}<+\infty,
\end{eqnarray} 
and, for all $N \in \N$, the series (of complex numbers), 
$\sum\limits_{n\ge0}\H_{\pi_Y([S]_n)}(N)$ converges absolutely to $\H_{\pi_Y(S)}(N)$. 
\item\label{QinDomH}  Conversely, let $Q \in \ncs{\C}{Y}$ with $Q=\sum\limits_{n\ge0}Q_n$
(decomposition by weights), we suppose that it exists $r\in ]0,1]$ such that 
\begin{eqnarray}\label{abs_sum2}
\sum\limits_{n,N\ge0}\abs{\H_{Q_n}(N)r^N}<+\infty,
\end{eqnarray}
in particular, for all $N\in \N$, $\sum\limits_{n\ge0}\H_{Q_n}(N)=\ell(N)\in\C$
unconditionally. Under such circumstances, $\pi_X(Q)\in\Dom_r(\Li)$ and, for all $z\in\C,\abs{z}\le r$,
\begin{eqnarray}\label{L2H_corresp2}
\frac{\Li_S(z)}{1-z}=\sum_{N\ge0}\ell(N)z^N,
\end{eqnarray}
\end{enumerate}
\end{theorem}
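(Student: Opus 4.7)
The plan is to dispatch the four items in sequence: (1) requires constructing and estimating a shuffle preimage of $T$, (2) is purely definitional, and (3) and (4) are dual applications of Lemma \ref{TaylSeq} to the double sequence $\tau=(\H_{\pi_Y([S]_n)}(N))_{n,N\ge 0}$.

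For item (1), I would first note that $(-(-x_1)^+)^{\shuffle N}$ has all its monomials of degree $\ge N$, so $S=\sum_N a_N(-(-x_1)^+)^{\shuffle N}$ is summable in $\ncs{\C}{X}$ (only finitely many terms contribute to each word), with support in $x_1^*$. Next, by the shuffle-morphism property \eqref{shuffle}, $\Li_{(-(-x_1)^+)^{\shuffle N}}(z)=z^N$: indeed, writing $-(-x_1)^+=\sum_{k\ge 1}(-1)^{k+1}x_1^k$ and using $\Li_{x_1^k}(z)=(-\log(1-z))^k/k!$ collapses the inner sum to $\Li_{-(-x_1)^+}(z)=-(e^{\log(1-z)}-1)=z$. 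This already gives $\Li_S=T$ on any disk where $\sum_n\Li_{[S]_n}$ converges. To obtain $S\in\Dom_R(\Li)$ with $R=(B+1)^{-1}$, I would extract homogeneous coefficients via Lemma \ref{surj_comb} (with $a=x_1$, absorbing the overall sign), getting $\scal{(-(-x_1)^+)^{\shuffle N}}{x_1^n}=(-1)^{n+N}N!\,S_2(n,N)$. Using $|\log(1-z)|\le\Li_{x_1}(|z|)$ and identity \eqref{12fold2}, the tail $\sum_n|\Li_{[S]_n}(z)|$ is dominated by $\sum_N|a_N|\bigl(|z|/(1-|z|)\bigr)^N$, which converges locally uniformly iff $|z|/(1-|z|)<1/B$, i.e., $|z|<R$; this yields unconditional convergence in $\calH(D_{<R})$ and $\Li_S=T$ there.

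Item (2) is definitional: since $S\in\Dom_R(\Li)$ guarantees $\Li_S/(1-z)\in\calH(D_{<R})$, the coefficients $\H_{\pi_Y(S)}(N)$ are simply its Taylor coefficients at zero, and compatibility with the polynomial case is immediate. For item (3), set $a_{n,N}:=\H_{\pi_Y([S]_n)}(N)$, so that $T_n(z):=\Li_{[S]_n}(z)/(1-z)=\sum_N a_{n,N}z^N$; since $S\in\Dom_R(\Li)$, the series $\sum_n T_n$ converges unconditionally in $\calH(D_{<R})$ (multiplication by the fixed function $(1-z)^{-1}$ preserves unconditional convergence). Lemma \ref{TaylSeq}(2) then applies and gives $R(\tau)\ge R$, hence \eqref{abs_sum0} for every $r\in\,]0,R[$, while the identification $\sum_n a_{n,N}=\H_{\pi_Y(S)}(N)$ follows from Theorem \ref{th1}. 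For item (4), the hypothesis \eqref{abs_sum2} is precisely that $r\in I(\tau)$ for $\tau=(\H_{Q_n}(N))_{n,N}$, so Lemma \ref{TaylSeq}(1) produces unconditional convergence of $\sum_n T_n$ in $\calH(D_{<r})$ with sum $T(z)=\sum_N\ell(N)z^N$. Since $\pi_X$ preserves the grading (weight in $Y$ equals degree in $X$), $\pi_X(Q_n)=[\pi_X(Q)]_n$, so each $T_n$ coincides with $\Li_{\pi_X(Q_n)}/(1-z)$; hence $\sum_n\Li_{[\pi_X(Q)]_n}$ converges unconditionally in $\calH(D_{<r})$, giving $\pi_X(Q)\in\Dom_r(\Li)$, and identifying Taylor coefficients yields \eqref{L2H_corresp2}.

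The main obstacle lies in item (1): spotting the correct shuffle preimage $(-(-x_1)^+)^{\shuffle N}$ of $z^N$ and pinning the sharp radius $R=(B+1)^{-1}$ via the equivalence $|z|/(1-|z|)<1/B\iff|z|<(B+1)^{-1}$. Once Lemma \ref{surj_comb} is invoked to compute the shuffle-power coefficients, the downstream items reduce to bookkeeping plus two clean dual uses of Lemma \ref{TaylSeq}.
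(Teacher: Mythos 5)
Your proposal is correct and follows essentially the same route as the paper: item (1) via Lemma \ref{surj_comb} and the $(e^x-1)^m$ identity with the domination $\sum_N|a_N|\bigl(r/(1-r)\bigr)^N$, and items (3)--(4) as dual applications of Lemma \ref{TaylSeq} (which the paper itself invokes, then re-derives through the same Cauchy-integral estimates). Your only additions are cosmetic improvements: the explicit verification $\Li_{(-(-x_1)^+)^{\shuffle N}}(z)=z^N$, which the paper leaves implicit, and citing Theorem \ref{th1} for the coefficient identification instead of redoing Cauchy's formula.
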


\begin{proof}
\begin{enumerate}
\item The fact that the series (\ref{preim_series1}) is summable comes from the fact that 
\begin{eqnarray*}
\omega(a_N(-(-x_1)^+)^{\shuffle N})\ge N
\end{eqnarray*}
(see \cite{NRSA}). Now from the lemma, we get 
\begin{eqnarray*}
(S)_n=\sum_{N\ge0}(a_N(-(-x_1)^+)^{\shuffle N})_n=(-1)^{N+n}a_NN!S_2(n,N)x_1^n.
\end{eqnarray*}
Then, with $r=\sup_{z\in K}\abs{z}$ (we have indeed $r=\absv{Id}_K$) and
taking into account that $\absv{\Li_{x_1}}_K\le\log({1}/(1-r))$, we have
\begin{eqnarray*}
\sum_{n\ge0}\absv{\Li_{(S)_n}}_K
&\le&\sum_{n\ge0}\sum_{N\ge0}\abs{a_N}N!S_2(n,N)\absv{\Li_{x_1^n}}_K\cr
&\le&\sum_{n\ge0}\sum_{N\ge0}\abs{a_N}N!S_2(n,N)\frac{\absv{\Li_{x_1}}_K^n}{n!} \cr 
&\le&\sum_{N\ge0}\abs{a_N}\sum_{n\ge0}N!S_2(n,N)\frac{\abs{\Li_{x_1}}_K^n}{n!} \cr
&\le&\sum_{N\ge0}\abs{a_N}(e^{\log(\frac{1}{1-r})}-1)^N \cr
&=&\sum_{N\ge0}\abs{a_N}\biggl(\frac{r}{1-r}\biggr)^N.
\end{eqnarray*}
Now if we suppose that $r\le(B+1)^{-1}$, we have $r(1-r)^{-1}\le{1}/{B}$
and this shows that the last sum is finite.

\item This point and next point are consequences of Lemma \ref{TaylSeq}.

Now, considering the homogeneous decomposition
\begin{eqnarray*}
S=\sum\limits_{n\ge0}[S]_n\in\Dom_R(\Li).
\end{eqnarray*}
we first establish inequation \eqref{abs_sum0}. Let $0<r<r_1<R$ and consider the path 
$\gamma(t)=r_1e^{2i\pi t}$, we have
\begin{eqnarray*}
\abs{\H_{\pi_Y([S]_n)}(N)}=\abs{\frac{1}{2i\pi}\int_\gamma\frac{\Li_{[S]_n}(z)}{(1-z)z^{N+1}}dz}\le 
\frac{2\pi}{2\pi}\frac{\absv{\Li_{[S]_n}}_K}{(1-r_1)r_1^{N+1}},
\end{eqnarray*}
$K=\gamma([0,1])$ being the circle of center $0$ and radius $r_1$.
Taking into account that, for $K\subset_{comp.}D_{<R}$, we have a decomposition
\begin{eqnarray*}
\sum_{n\in\N}\abs{\Li_{[S]_n}}_K=M<+\infty,
\end{eqnarray*}
we get 
\begin{eqnarray*}
\sum_{n,N\ge0}\abs{\H_{\pi_Y([S]_n)}(N)r^N}
&=&\sum_{n,N\ge0}\abs{\H_{\pi_Y([S]_n)}(N)r_1^N}(\frac{r}{r_1})^N\cr 
&=&\sum_{N\ge0}(\frac{r}{r_1})^N\sum_{n\ge0}\abs{\H_{\pi_Y([S]_n)}(N)r_1^N}\cr
&\le&\sum_{N\ge0}(\frac{r}{r_1})^N\frac{M}{(1-r_1)r_1}\cr
&\le&\frac{M}{(1-r_1)(r_1-r)}<+\infty.
\end{eqnarray*}  
The series 
$\sum\limits_{n\ge0}\Li_{[S]_n}(z)$ converges to $\Li_{S}(z)$ in $\calH(D_{<R})$
($D_{<R}$ is the open disk defined by $|z|<R$). For any $N\ge0$, by Cauchy's formula, one has, 
\begin{eqnarray*}
\H_{\pi_Y(S)}(N)
&=&\frac{1}{2i\pi}\int_\gamma\frac{\Li_{S}(z)}{(1-z)z^{N+1}}dz\cr
&=&\frac{1}{2i\pi}\int_\gamma\frac{\sum_{n\ge0}\Li_{[S]_n}(z)}{(1-z)z^{N+1}}dz\cr
&=&\frac{1}{2i\pi}\sum_{n\ge0}\int_\gamma\frac{\Li_{[S]_n}(z)}{(1-z)z^{N+1}}dz\cr
&=&\sum_{n\ge0}\H_{\pi_Y([S]_n)}(N)
\end{eqnarray*}
the exchange of sum and integral being due to the compact convergence.
The absolute convergence comes from the fact that the convergence of
$\sum\limits_{n\ge}\Li_{[S]_n}(z)$ is unconditional \cite{Sch}.
\item Fixing $N\in\N$, from inequation \eqref{abs_sum2}, we get
$\sum\limits_{n\ge0}\abs{\H_{Q_n}(N)}<+\infty$ which proves the absolute convergence.
Remark now that $(\pi_X(Q))_n=\pi_X(Q_n)$ and $\pi_Y(\pi_X(Q_n))=Q_n$,
one has, for all $\abs{z}\le r$ 
\begin{eqnarray*}
\abs{\Li_{\pi_X(Q_n)}(z)}=\abs{\sum_{N\in\N}\H_{Q_n}(N)z^N}\le\abs{\sum_{N\in \N}\H_{Q_n}(N)r^N},
\end{eqnarray*}
in other words
\begin{eqnarray*}
\absv{\Li_{\pi_X(Q_n)}}_{D\leq r}\le\abs{\sum_{N\in \N}\H_{Q_n}(N)r^N}
\end{eqnarray*}
and 
\begin{eqnarray*}
\sum_{n\in \N}\absv{\Li_{\pi_X(Q_n)}}_{D\le r}\le\abs{\sum_{n,N\in\N}\H_{Q_n}(N)r^N}<+\infty
\end{eqnarray*}
which shows that $\pi_X(Q)\in\Dom_r(\Li)$. The equation (\ref{L2H_corresp2}) is a consequence of point 2, taking $S=\pi_X(Q)$.
\end{enumerate}
\end{proof}

\begin{definition}
We set 
\begin{eqnarray*}
\Dom^{\mathrm{loc}}(\Li)=\bigcup\limits_{0<R\le1}\Dom_R(\Li) ; 
\Dom(\H_\bullet)=\pi_Y(\Dom^{\mathrm{loc}}(\Li))
\end{eqnarray*}
and, for $S\in\Dom^{\mathrm{loc}}(\Li)$,
\begin{center}
$\Li_S(z)=\sum\limits_{n\ge0}\Li_{[S]_n}(z)$ and $\dfrac{\Li_S(z)}{1-z}=\sum\limits_{N\ge0}\H_{\pi_Y(S)}(N)z^N$.
\end{center}
\end{definition}

Observe that, from this definition, theorem \eqref{theomain}, will show that $\Dom(\H_\bullet)$ is a stuffle subalgebra 
of $\ncs{\C}{Y}$.  
\begin{enumerate}
\item The series $T=\sum\limits_{n=1}^{\infty}(-1)^{n-1}y_n/n\in\ncs{\C}{Y}$ is not in 
$\Dom(\H_\bullet)$ because, for all $0<r<1$, one has 
\begin{eqnarray}
\sum_{n,N}\abs{T_n(N)r^N}\ge\sum_{n\ge0}\frac{1}{1-r}=+\infty
\end{eqnarray}
However one can get unconditional convergence using a sommation by pairs (odd + even).
\item For all $s\in]1,+\infty[$, the series $T(s)=\sum\limits_{n=1}^{\infty}(-1)^{n-1}y_nn^{-s}\in\ncs{\C}{Y}$ is in $\Dom(\H_\bullet)$.
\end{enumerate}

We can now state the 
\begin{theorem}\label{theomain}
Let $S,T\in\Dom^{\mathrm{loc}}(\Li)$, then
\begin{eqnarray*}
S\shuffle T\in\Dom^{\mathrm{loc}}(\Li),\pi_X(\pi_Y(S)\stuffle\pi_Y(T))\in\Dom^{\mathrm{loc}}(\Li)
\end{eqnarray*}
and for all $N\ge0$,
\begin{eqnarray}
\Li_{S\shuffle T}&=&\Li_{S}\Li_{T};\quad\Li_{1_{X^*}}=1_{\calH(\Omega)},\label{eq1}\\
\H_{\pi_Y(S)\stuffle \pi_Y(T)}(N)&=& \H_{\pi_Y(S)}(N)\H_{\pi_Y(T)}(N).\label{eq2}\\
\dfrac{\Li_{S}(z)}{1-z}\odot\dfrac{\Li_{T}(z)}{1-z}&=&\dfrac{\Li_{\pi_X(\pi_Y(S)\stuffle\pi_Y(T))}(z)}{1-z}.\label{eq3}
\end{eqnarray}
\end{theorem}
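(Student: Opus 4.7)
The plan is to handle shuffle and stuffle separately, with \eqref{eq3} following easily from \eqref{eq2}. For the shuffle half, I would pick radii $R_1,R_2\in(0,1]$ such that $S\in\Dom_{R_1}(\Li)$ and $T\in\Dom_{R_2}(\Li)$; setting $R=\min(R_1,R_2)$, Proposition \ref{lem2}(2) immediately gives $S\shuffle T\in\Dom_R(\Li)\subset\Dom^{\mathrm{loc}}(\Li)$ together with identity \eqref{eq1}.

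For the stuffle half, I would write $U=\pi_Y(S)$, $V=\pi_Y(T)$ and exploit that $\stuffle$ is weight-homogeneous on $Y$ (since $y_i\stuffle y_j$ is a sum of words of weight $i+j$), yielding the finite decomposition $(U\stuffle V)_n=\sum_{p+q=n}U_p\stuffle V_q$ with $U_p=\pi_Y([S]_p)$, $V_q=\pi_Y([T]_q)$. The stuffle character property for polynomials then produces, for every $N$,
\begin{eqnarray*}
\H_{(U\stuffle V)_n}(N)=\sum_{p+q=n}\H_{U_p}(N)\,\H_{V_q}(N).
\end{eqnarray*}
My plan is to feed $Q=U\stuffle V$ into Theorem \ref{PropDomR}(\ref{QinDomH}), which would automatically deliver $\pi_X(U\stuffle V)\in\Dom^{\mathrm{loc}}(\Li)$; summing the displayed formula over $n$ and swapping the order of summation by absolute convergence would then give \eqref{eq2}.

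The main obstacle is verifying the summability hypothesis of Theorem \ref{PropDomR}(\ref{QinDomH}), namely producing some $r\in(0,1]$ with $\sum_{n,N}|\H_{(U\stuffle V)_n}(N)|\,r^N<+\infty$. Theorem \ref{PropDomR}(\ref{absconv1}) applied to $S$ and $T$ will show that $A_N:=\sum_p|\H_{U_p}(N)|$ and $B_N:=\sum_q|\H_{V_q}(N)|$ are Taylor coefficients of series with radii of convergence at least $R_1$ and $R_2$. Hence $A_N r_1^N$ and $B_N r_2^N$ are bounded whenever $r_1<R_1$, $r_2<R_2$, which dominates $A_N B_N r^N$ by a geometric series for $r<r_1 r_2$; combined with the displayed formula this yields $\sum_{n,N}|\H_{(U\stuffle V)_n}(N)|\,r^N\le\sum_N A_N B_N r^N<+\infty$ for any such $r>0$, and one only has to check that this joint radius is genuinely positive, which is clear. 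Finally, \eqref{eq3} comes for free: both sides, by the definition of $\H_\bullet$ on $\Dom^{\mathrm{loc}}(\Li)$, are the Taylor series at $0$ of the sequence $N\mapsto\H_U(N)\H_V(N)$ — the left-hand side through the definition of the Hadamard product together with \eqref{eq2}, and the right-hand side through the definition applied directly to $\pi_X(U\stuffle V)$.
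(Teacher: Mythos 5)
Your proposal is correct and follows essentially the same route as the paper's proof: the shuffle half via Proposition~\ref{lem2}, and the stuffle half via weight-gradedness of $\stuffle$, the polynomial character identity $\H_{(U\stuffle V)_n}(N)=\sum_{p+q=n}\H_{U_p}(N)\H_{V_q}(N)$, and Theorem~\ref{PropDomR} points~\ref{absconv1} and~\ref{QinDomH}. The only cosmetic difference is that you verify the summability hypothesis \eqref{abs_sum2} by a direct geometric estimate on $A_Nr_1^N$ and $B_Nr_2^N$, where the paper invokes Hadamard's theorem on products of Taylor series; the substance is the same.
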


\begin{proof}
For equation (\ref{eq1}), we get, from lemma \ref{lem2} that $Dom^{loc}(\Li)$ is the union of an increasing
set of shuffle subalgebras of $\ncs{\C}{X}$. It is therefore a shuffle subalgebra of the latter. 

For equation \eqref{eq2}, suppose $S\in\Dom^{R_1}_0(\Li)$ (resp. $T\in\Dom^{R_2}_0(\Li)$).
By \cite{Had_prod} and theorem \ref{PropDomR}, one has 
\begin{eqnarray*}
\dfrac{\Li_{S}(z)}{1-z}\odot\dfrac{\Li_{T}(z)}{1-z}\in\Dom^{R_1R_2}_0(\Li),
\end{eqnarray*}
where $\odot$ stands for the Hadamard product \cite{Had_prod}. Hence, for $|z|<R_1R_2$, one has
\begin{eqnarray}
f(z)=\dfrac{\Li_{S}(z)}{1-z}\odot \dfrac{\Li_{T}(z)}{1-z}=\sum_{N\ge0}\H_{\pi_Y(S)}(N)\H_{\pi_Y(T)}(N)z^N
\end{eqnarray}
and, due to theorem \ref{PropDomR} point \eqref{absconv1}, for all $N$,
$\sum\limits_{p\ge0}\H_{\pi_Y(S_p)}(N)=\H_{\pi_Y(S)}(N)$ and
$\sum_{q\ge0}\H_{\pi_Y(T_q)}(N)=\H_{\pi_Y(T)}(N)$ (absolute convergence)
then, as the product of two absolutely convergent series is absolutely convergent
(w.r.t. the Cauchy product), one has, for all $N$,
\begin{eqnarray}
\H_{\pi_Y(S)}(N)\H_{\pi_Y(T)}(N)
&=&\biggl(\sum_{p\ge0}\H_{\pi_Y(S_p)}(N)\biggr)\biggl(\sum_{q\ge0}\H_{\pi_Y(T_q)}(N)\biggr)\cr 
&=&\sum_{p,q\ge0}\H_{\pi_Y(S_p)}(N)\H_{\pi_Y(T_q)}(N)\cr
&=&\sum_{n\ge0}\sum_{p+q=n}\H_{\pi_Y(S_p)\stuffle\pi_Y(T_q)}(N)\cr
&=&\sum_{n\ge0}\H_{(\pi_Y(S)\stuffle\pi_Y(T))_n}(N). 
\end{eqnarray}

Remains to prove that condition of Theorem \ref{PropDomR}, \textit{i.e.} inequation \eqref{abs_sum2}
is fulfilled. To this end, we use the well-known fact that if $\sum_{m\ge 0}c_m\,z^m$ has radius of
convergence $R>0$, then $\sum\limits_{m\ge 0}\abs{c_m}z^m$ has the same radius of convergence
(use $1/R=\limsup_{m\ge 1}\abs{c_m}^{-m}$), then from the fact that $S\in\Dom^{R_1}_0(\Li)$ 
(resp. $T\in Dom^{R_2}_0(\Li)$), we have \eqref{abs_sum0} for each of them and,
using the Hadamard product of these expressions, we get
\begin{eqnarray*}
\forall r\in]0,R_1.R_2[,&&\sum_{p,q,N\ge0}|\H_{\pi_Y(S_p)}(N)\H_{\pi_Y(T_q)}(N)\,r^N|<+\infty,
\end{eqnarray*}
and this assures, for $|z|<R_1R_2$, the convergence of 
\begin{eqnarray}
f(z)=\sum_{n,N\ge0}\H_{(\pi_Y(S)\stuffle\pi_Y(T))_n}(N)z^N
\end{eqnarray}
applying theorem \ref{PropDomR} point \eqref{QinDomH} to $Q=\pi_Y(S)\stuffle\pi_Y(T)$
(with any $r<R_1R_2$), we get $\pi_X(Q)=\pi_X(\pi_Y(S)\stuffle\pi_Y(T))\in\Dom^{\mathrm{loc}}(\Li)$ and 
\begin{eqnarray*}
f(z)=\sum_{N\ge0}\Big(\sum_{n\ge0}\H_{(\pi_Y(S)\stuffle\pi_Y(T))_n}(N)\Big)z^N
=\dfrac{\Li_{\pi_X(\pi_Y(S)\stuffle\pi_Y(T))}(z)}{1-z}.
\end{eqnarray*}
hence \eqref{eq2}.
\end{proof}

\subsection{Stuffle product and stuffle characters}\label{stuffledef}

For the some reader's convenience, we recall here the definitions of shuffle and stuffle products.
As regards shuffle, the alphabet $\calX$ is arbitrary and $\shuffle$ is defined by the following recursion 
(for $a,b\in\calX$ and $u,v\in\calX^*$)  
\begin{eqnarray}
u\shuffle 1_{\calX^*}&=&1_{\calX^*}\shuffle u=u,\\
au\shuffle bv&=& a(u\shuffle bv)+b(au\shuffle v).
\end{eqnarray}
As regards stuffle, the alphabet is $Y=Y_{\N_{\ge1}}=\{y_s\}_{s\in\N_{\ge1}}$ and $\stuffle$ is defined by the following recursion
\begin{eqnarray}
u\stuffle 1_{Y^*}&=&1_{Y^*}\stuffle u=u,\\
y_su\stuffle y_tv&=&y_s(u\stuffle y_tv)+y_t(y_su\stuffle v) + y_{s+t}(u\stuffle v).
\end{eqnarray}
Be it for stuffle or shuffle, the noncommutative\footnote{For concatenation.} polynomials equipped with this product form an
associative commutative and unital algebra namely $(\ncp{\C}{X},\shuffle,\allowbreak1_{X^*})$ (resp. $(\ncp{\C}{Y},\stuffle,1_{Y^*})$). 

\begin{example}
As examples of characters, we have already seen
\begin{itemize}
\item $\Li_{\bullet}$ from $(\Dom^{\mathrm{loc}}(\Li_{\bullet}),\shuffle,1_{X^*})$ to $\calH(\Omega)$
\item $\H_{\bullet}$ from $(\Dom(\H_{\bullet}),\stuffle,1_{Y^*})$ to $\C^{\N}$ (arithmetic functions $\N\longrightarrow\C$)
\end{itemize}
\end{example}
In general, a character from a $k$-algebra\footnote{Here we will use $k=\Q$ or $\C$.} $(\calA,*_1,1_{\calA})$ with
values in $(\calB,*_2,1_{\calB})$ is none other than a morphism between the $k$-algebras $\calA$ and a commutative
algebra\footnote{In this context all algebras are associative and unital.} $\calB$. The algebra $(\calA,*_1,1_{\calA})$
does not have to be commutative for example characters of  $(\ncp{\C}{\calX},\conc,1_{\calX^*})$
- \textit{i.e.} $\conc$-characters - where all proved to be of the form \cite{PVNC}
\begin{eqnarray}\label{Plane}
\biggl(\sum_{x\in\calX}\alpha_xx\biggr)^*
\end{eqnarray}
\textit{i.e.} Kleene stars of the plane \cite{GHM22,BHN,CM}.
They are closed under shuffle and stuffle and endowed with these laws,
they form a group. Expressions like \eqref{Plane} (\textit{i.e.}
homogeneous series of degree 1) form a vector space noted $\widehat{\C.Y}$.

As a consequence, given $P=\sum\limits_{i\ge1}\alpha_iy_i$ and $Q=\sum\limits_{j\ge1}\beta_jy_j$,
we know in advance that their stuffle is a $\tt conc$-character \textit{i.e.} of the form
$(\sum\limits_{n\ge1}c_ny_n)^*$. Examining the effect of this stuffle on each letter (which suffices),
we get the identity \cite{PVNC}
\begin{eqnarray}\label{WI}
\biggl(\sum_{i\ge1}\alpha_iy_i\biggr)^*\stuffle\biggl(\sum_{j\ge1}\beta_jy_j\biggr)^*=
\biggl(\sum_{i\ge1}\alpha_iy_i+\sum_{j\ge1}\beta_jy_j+\sum_{i,j\ge1}\alpha_i\beta_jy_{i+j}\biggr)^*
\end{eqnarray}  
which suggests to take an auxiliary variable, say $q$, and code ``the plane'' $\widehat{\C.Y}$,
\textit{i.e.} expressions like \eqref{Plane}, like in Umbral calculus by

\begin{eqnarray}\label{Umbra}
\pi_Y^{\mathrm{Umbra}}:\sum_{n\ge1}\alpha_n\,q^n\longmapsto\sum_{n\ge1}\alpha_ny_n   
\end{eqnarray}
which is linear and bijective\footnote{Its inverse will be naturally noted $\pi_q^{\mathrm{Umbra}}$.} from $\C_+[[q]]$ to $\widehat{\C.Y}$.

With this coding at hand and for $S,T\in \C_+[[q]]$, identity \eqref{WI} reads 
\begin{eqnarray}\label{compo1}
(\pi_Y^{\mathrm{Umbra}}(S))^*\stuffle (\pi_Y^{\mathrm{Umbra}}(T))^*=(\pi_Y^{\mathrm{Umbra}}((1+S)(1+T)-1))^*
\end{eqnarray}  
This shows that if one sets, for $z\in\C$ and $T\in\C_+[[x]]$,  
\begin{eqnarray}
G(z)=(\pi_Y^{\mathrm{Umbra}}(e^{zT}-1))^* 
\end{eqnarray}
we get a one-parameter stuffle group\footnote{\textit{i.e.} $G(z_1+z_2)=G(z_1)\stuffle G(z_2);G(0)=1_{Y^*}$.}, drawn on 
$1+\ncs{\C[z]_+}{Y}$ (a Magnus group), i.e. such that every 
coefficient is polynomial in $z$. Differentiating it we get 
\begin{eqnarray}\label{diffeq3}
\frac{d}{dz}(G(z))=(\pi_Y^{\mathrm{Umbra}}(T))G(z)
\end{eqnarray}
and \eqref{diffeq3} with the initial condition $G(0)=1_{Y^*}$ integrates as 
\begin{eqnarray}\label{soldiffeq3}
G(z)=\exp_{\stuffle}(z\pi_Y^{\mathrm{Umbra}}(T))
\end{eqnarray}
where the exponential map for the stuffle product is defined,
for any $P\in\ncs{\C}{Y}$ such that $\scal{P}{1_{Y^*}}=0$, is defined by 
\begin{eqnarray}
\exp_{\stuffle}(P):=1_{Y^*}+\dfrac{P}{1!}+\dfrac{P\stuffle P}{2!}+\ldots+\dfrac{P^{\stuffle n}}{n!}+\ldots.
\end{eqnarray}
In particular, from \eqref{soldiffeq3}, one gets, for $k\ge1$,
\begin{eqnarray}\label{WIk}
(zy_k)^*&=&\exp_{\stuffle}\biggl(-\sum_{n\ge1}y_{nk}\frac{(-z)^n}{n}\biggr).
\end{eqnarray}
This expression and that of 
\begin{eqnarray}
\frac1{\Gamma(1+z)}=\exp\biggl(\gamma z-\sum_{n\ge2}\zeta(n)\frac{(-z)^n}n\biggr)
\end{eqnarray} 
suggests to consider lacunary analogues of the inverse Gamma function together with a character which sends 
$y_1$ to $\gamma$ and $y_n,n\ge2$ to $\zeta(n)$. This $\stuffle$-character is provided by asymptotic analysis
of the Harmonic Sums. Indeed, one can show that, $w\in Y^*$ being given, the asymptotic expansion of $N\longmapsto\H_w(N)$,
along the asymptotic scale $(\log(N)^pN^{-q})_{p,q\in \N}$, at any rate\footnote{This means that the
following expression is the limit of all partial asymptotic expansions.}, can be written 
\begin{eqnarray}\label{AsympH}
\sum_{q\ge0}Q_{w,q}(\log(N))N^{-q},
\end{eqnarray}
where $Q_{w,q}\in\C[X]$ (univariate polynomials) and, in particular, $Q_{w,0}\in\Q[\gamma][X]$ \cite{Daresbury}. 
From this and the fact that $\H_\bullet$ is a $\stuffle$-character, one gets that $w\longmapsto Q_{w,q}$ (resp. 
$\gamma_\bullet:w\longmapsto Q_{w,q}(0)$) is a $\stuffle$-character with values in $\Q[\gamma][X]$ (resp. $\Q[\gamma][X]$). 

Now, a domain\footnote{Open, nonempty and connected subset of $\C$.} $\Omega$ being given, it is easy to see that any  
$\stuffle$-character $\chi$ (with general complex values and in particular $\gamma_\bullet$) classically extends  
$\ncp{\calH(\Omega)}{Y}$ by 
\begin{eqnarray}
\chi(P)=\sum\limits_{w\in Y^*}\scal{P}{w}\scal{\chi}{w}     
\end{eqnarray}
as a $\stuffle$-character from $\ncp{\calH(\Omega)}{Y}$ with values in $\calH(\Omega)$. 

Now, we can extend $\chi$ to some series, over $Y$. For that, let us set as above and as in \cite{GHM22,BHN,CM},
\begin{definition}
For any $T\in \ncs{\calH(\Omega)}{Y}$, we note $[T]_n$ the homogeneous component\footnote{
The weight (w) of $w \in Y^*$ is just the sum of its indices} $\sum\limits_{|w|=n}\scal{T}{w}w$ of $T$ 
\begin{eqnarray}
\Dom(\chi,\Omega)=\{T\in\ncs{\calH(\Omega)}{Y}|(\chi(T_n))_{n\in\N}&\mbox{is summable in}&\calH(\Omega)\}
\end{eqnarray}
The result, $\sum\limits_{n\ge0}\chi(T_n)$ will be noted $\hat{\chi}(T)$.
\end{definition}
This being defined, we have the following theorem 

\begin{theorem}\label{stability1}
Let $\chi:\ncp{\C}{Y}\longrightarrow\C$ be a $\stuffle$-character\footnote{We will still note its extension to $\ncp{\calH(\Omega)}{Y}$ by $\chi$.}
\begin{enumerate}
\item $\ncp{\calH(\Omega)}{Y}\subset \Dom(\chi,\Omega)$  
\item If $S,T\in\Dom(\chi,\Omega)$ then $S\stuffle T\in\Dom(\chi,\Omega)$\footnote{In fact $\Dom(\chi,\Omega)$ is a 
subalgebra of $(\ncs{\calH(\Omega)}{Y},\stuffle,1_{Y^*})$} and 
\begin{eqnarray}\label{shufflechar}
\hat{\chi}(S\stuffle T)=\hat{\chi}(S)\hat{\chi}(T)
\end{eqnarray}
\item If $S\in\Dom(\chi,\Omega)$, then $\exp_{\stuffle}(S)\in \Dom(\chi,\Omega)$ and 
\begin{eqnarray*}
\hat{\chi}(\exp_{\stuffle}(S))=e^{\hat{\chi}(S)}
\end{eqnarray*}
\end{enumerate}
\end{theorem}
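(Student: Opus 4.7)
The plan is to treat the three points in order, using throughout that the stuffle product is graded by weight, so that $(S\stuffle T)_n=\sum_{p+q=n}S_p\stuffle T_q$ for the weight-homogeneous decomposition, together with the character property $\chi(A\stuffle B)=\chi(A)\chi(B)$ valid on polynomials.

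Point 1 is immediate: any $P\in\ncp{\calH(\Omega)}{Y}$ has finitely many nonzero homogeneous components, so $(\chi(P_n))_n$ has finite support in $\calH(\Omega)$, hence is trivially summable.

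For point 2, the grading and the character property give
\[
\chi((S\stuffle T)_n)=\sum_{p+q=n}\chi(S_p\stuffle T_q)=\sum_{p+q=n}\chi(S_p)\chi(T_q),
\]
so the family $\bigl(\chi((S\stuffle T)_n)\bigr)_n$ is the Cauchy product of the unconditionally summable families $(\chi(S_p))_p$ and $(\chi(T_q))_q$ in $\calH(\Omega)$. I would then invoke that $\calH(\Omega)$ is a Fr\'echet nuclear algebra, in which unconditional summability coincides with absolute (seminorm-wise) summability; this implies that the Cauchy product of two such families is unconditionally summable with sum equal to the product of the sums. Both $S\stuffle T\in\Dom(\chi,\Omega)$ and \eqref{shufflechar} follow.

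For point 3, since $\scal{S}{1_{Y^*}}=0$, one has $[S^{\stuffle m}]_N=0$ for $m>N$, so $\exp_\stuffle(S)$ is well defined in $\ncs{\calH(\Omega)}{Y}$ and each weight-$N$ component is a finite sum. Iterating point 2 yields $S^{\stuffle n}\in\Dom(\chi,\Omega)$ with $\hat\chi(S^{\stuffle n})=\hat\chi(S)^n$, and an exchange of summations (legitimized once more by the nuclearity argument of point 2) gives
\[
\sum_{N\ge0}\chi([\exp_\stuffle(S)]_N)=\sum_{n\ge0}\frac{1}{n!}\sum_{N\ge0}\chi([S^{\stuffle n}]_N)=\sum_{n\ge0}\frac{\hat\chi(S)^n}{n!}=e^{\hat\chi(S)}.
\]
The main obstacle is precisely this Fubini-type exchange: one must use the nuclear structure of $\calH(\Omega)$ to propagate unconditional summability from the one-dimensional families $(\chi(S_p))_p,(\chi(T_q))_q$ to the doubly-indexed family $(\chi(S_p)\chi(T_q))_{p,q}$, and then to its Cauchy contraction. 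Once that is in place, everything reduces to formal calculations with the stuffle character property of $\chi$.
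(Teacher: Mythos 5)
Your proposal is correct and follows essentially the same route as the paper: points 1 and 2 are proved identically (finite support, then the weight-graded Cauchy-product estimate, which the paper carries out explicitly with the seminorms $\absv{\cdot}_K$ on compacts $K\subset\Omega$ rather than by naming nuclearity). For point 3 the paper expands $[\exp_{\stuffle}(S)]_n$ directly by the multinomial formula in the homogeneous components $[S]_q$ and dominates the sum by $\prod_{q\ge1}e^{\absv{\chi([S]_q)}_K}=e^{M}$, whereas you iterate point 2 on the powers $S^{\stuffle n}$ and then exchange summations; this is the same estimate in different bookkeeping, and your Fubini step is indeed justified by the bound $\sum_{N}\absv{\chi([S^{\stuffle n}]_N)}_K\le M^{n}$ obtained from the point-2 computation, which makes the double family absolutely summable against $1/n!$.
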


\begin{proof}
\begin{enumerate}
\item By finitely supported sum.
\item $S,T\in\Dom(\chi,\Omega)$ then $(\chi(S_p))_{p\ge0},(\chi(T_q))_{p\ge0}$ are summable.
But, as $\stuffle$ is graded for the weight, one has $[S\stuffle T]_n=\sum_{p+q=n}[S]_p\stuffle [T]_q$.
Take any $K$ nonempty compact within $\Omega$, then 
\begin{eqnarray*}
\sum_{n\ge0}\absv{\chi([S\stuffle T]_n)}_K
&=&\sum_{n\ge0}\absv{\chi\biggl(\sum_{p+q=n}[S]_p\stuffle [T]_q\biggr)}_K\cr
&=&\sum_{n\ge0}\absv{\sum_{p+q=n}\chi([S]_p)\chi([T]_q)}_K\cr 
&\le&\sum_{n\ge0}\absv{\sum_{p+q=n}\chi([S]_p)}_K\absv{\chi([T]_q)}_K\cr
&=&\sum_{p,q\ge0}\absv{\chi([S]_p)}_K\absv{\chi([T]_q)}_K<+\infty.
\end{eqnarray*}
The same computation without the seminorm proves \eqref{shufflechar}.

\item If $S\in \Dom(\chi,\Omega)$ and we have to examine (and prove)
the summability of the family $(\chi([\exp_{\stuffle}(S)]_n))_{n\ge0}$.
Setting $S=\sum_{q\ge0}[S]_q$, we have 
\begin{eqnarray}
[\exp_{\stuffle}(S)]_n=\sum_{m\ge0}\sum_{q_1+2q_2+\cdots mq_m=n}
\frac{[S]_1^{\stuffle q_1}\stuffle\cdots\stuffle[S]_m^{\stuffle q_m}}{q_1!q_2!\cdots q_m!}.
\end{eqnarray}
Hence, with all $q_i>0$,
\begin{eqnarray}
\sum_{n\ge0}\absv{\chi([\exp_{\stuffle}(S)]_n)}_K
&\le&1+\sum_{n>0}\sum_{m>0}\sum_{q_1+2q_2+\cdots mq_m=n}\\
&&\hfill\frac{\absv{\chi([S]_1}_K^{q_1}\cdots\absv{\chi([S]_m)}_K^{q_m}}{q_1!q_2!\cdots q_m!}\cr
&\le&\prod_{q\ge1}e^{\absv{\chi([S]_q}_K}\cr 
&=&e^{\sum_{q\ge1}\absv{\chi([S]_q}_K}\cr 
&=&e^M<+\infty.
\end{eqnarray}
because, as $S\in\Dom(\chi,\Omega)$, we have $\sum_{q\ge1}\absv{\chi([S]_q)}_K=M<+\infty$.
\end{enumerate}
\end{proof}

\subsection{A remarkable set of exponents}
On the formal side, from \eqref{WIk}, we have \cite{PVNC}
\begin{eqnarray}
(z^ky_k)^*=\exp_{\stuffle}\biggl(-\sum_{n\ge1}y_{nk}\frac{(-z)^{nk}}{n}\biggr),
&\mbox{for}&z\in\C,\abs{z}<1,
\end{eqnarray}
and transform it through the $\stuffle$-character $\hat\gamma_\bullet$. First of all, we compute the radius
of convergence of the image of the exponent (for coherence  with the ``bullet-notation'', we will note
$\gamma_{y_n}$ the image of $y_n$ by the character $\gamma_\bullet$) which gives \cite{BHN}
\begin{eqnarray}\label{ell}
\mbox{for }z\in\C,\abs{z}<1,&&
\ell_k(z)=\left\{
\begin{array}{rclcl}
\gamma z-\displaystyle\sum_{n\ge2}\zeta(n)\frac{(-z)^n}{n}&\mbox{if}&k=1,\cr
-\displaystyle\sum_{n\ge1}\zeta(nk)\frac{(-z)^{nk}}{n}&\mbox{if}&k>1.
\end{array}\right.
\end{eqnarray}
Then, from the fact that $1<\zeta(n)\le\zeta(2)=\pi^2/6$ (for $n\ge2$),
we get that the radius of convergence of all $\ell_k(z)$ is $R=1$. Therefore,
we set $\Omega=D_{<1}$, the open disk of radius one centered at zero and get
that  all $-\sum\limits_{n\ge1}y_{nk}{(-z)^{nk}}/n$ belong to
$\Dom(\hat\gamma_{\bullet},\Omega)$. Third point of theorem \eqref{stability1}
implies at once 
\begin{itemize}
\item Their exponentials \cite{BHN,PVNC}
\begin{eqnarray}
(z^ky_k)^*=\exp_{\stuffle}\biggl(-\sum_{n\ge1}y_{nk}\frac{(-z)^{nk}}{n}\biggr);
&\mbox{for}&z\in\C,\abs{z}<1.
\end{eqnarray}
are all in $\Dom(\hat\gamma_\bullet,\Omega)$ and therefore linearly independent.
\item and their transforms through $\hat\gamma_\bullet$ follow exponentiation (for $z\in\C,\abs{z}<1$), \textit{i.e.} \cite{BHN}
\begin{eqnarray}
\hat\gamma_{(zy_k)^*}=\exp(\ell_k(z))=\left\{
\begin{array}{rclcl}
\exp\biggl(\gamma z-\displaystyle\sum_{n\ge2}\frac{(-z)^n\zeta(n)}{n}\biggr)&\mbox{if}&k=1,\cr
\exp\biggl(-\displaystyle\sum_{n\ge1}\zeta(nk)\frac{(-z)^{nk}}{n}\biggr)&\mbox{if}&k>1.
\end{array}\right.
\end{eqnarray}
\end{itemize}   
This leads us to set, for all $k\ge1$ and for $z\in\C,\abs{z}<1$, \cite{BHN}
\begin{eqnarray}\label{NewEulerian}
\Gamma_{y_k}(1+z):=e^{-\ell_k(z)},&\mbox{for}&z\in\C,\abs{z}<1.
\end{eqnarray}

\begin{proposition}[\cite{BHN}]\label{plein}
The families $(\ell_r)_{r\ge1}$ and $(e^{\ell_r})_{r\ge1}$
are $\C$-linearly free and free from $1_{\calH(\Omega)}$.
\end{proposition}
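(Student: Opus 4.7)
The plan is to prove both independence statements by inspecting Taylor expansions at $z=0$ and running a simple triangular argument on valuations. The key preliminary observation is that, from formula \eqref{ell}, each $\ell_k$ is a power series in $z$ whose valuation at the origin is exactly $k$: for $k=1$ the coefficient of $z^1$ is $\gamma\ne 0$, while for $k\ge 2$ the term $n=1$ in the sum gives a coefficient $-(-1)^k\zeta(k)\ne 0$, and only powers $z^{nk}$ with $n\ge 1$ appear. Consequently, since $\ell_k(0)=0$, the expansion $e^{\ell_k(z)}=1+\ell_k(z)+\ell_k(z)^2/2+\cdots$ shows that $e^{\ell_k}-1$ has the same valuation $k$ and the same leading coefficient as $\ell_k$ (the quadratic and higher tails contribute only to monomials of degree $\ge 2k$).

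For the family $(\ell_r)_{r\ge 1}$ together with $1_{\calH(\Omega)}$: assume a relation $c_0 + \sum_{r=1}^N c_r\,\ell_r(z)\equiv 0$ in $\calH(\Omega)$. Looking at the constant term immediately yields $c_0 = 0$, since each $\ell_r$ vanishes at $0$. Then I proceed by induction on $k$: suppose $c_1=\cdots=c_{k-1}=0$. Because $\ell_r$ has valuation $r$, only the terms with $r\le k$ can contribute to the coefficient of $z^k$, and among these only $\ell_k$ survives, giving a relation of the form $c_k\cdot \alpha_k=0$ with $\alpha_k\ne 0$ (equal to $\gamma$ if $k=1$, to $-(-1)^k\zeta(k)$ otherwise). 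Hence $c_k=0$, and the induction closes.

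For the family $(e^{\ell_r})_{r\ge 1}$ together with $1_{\calH(\Omega)}$: I first rewrite an arbitrary relation $c_0+\sum_{r=1}^N c_r\,e^{\ell_r(z)}\equiv 0$ as $\bigl(c_0+\sum_{r=1}^N c_r\bigr)+\sum_{r=1}^N c_r\bigl(e^{\ell_r(z)}-1\bigr)\equiv 0$. Evaluating at $z=0$ gives $c_0+\sum_r c_r=0$. Since each $e^{\ell_r}-1$ has valuation exactly $r$ with the same nonzero leading coefficient $\alpha_r$ as $\ell_r$, the same triangular induction on the coefficient of $z^k$ forces $c_k=0$ for every $k\ge 1$, and then $c_0=0$ follows from the relation at $z=0$.

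The argument is essentially bookkeeping once the valuation statement is in place; the only genuinely nontrivial ingredients are the nonvanishing of the leading coefficients, namely $\gamma\ne 0$ and $\zeta(k)\ne 0$ for $k\ge 2$, both of which are classical. The potential obstacle I would watch for is making sure that, for each fixed $k$, the contributions to the coefficient of $z^k$ from the previously eliminated $c_r$'s with $r<k$ indeed vanish—this is exactly why one must do the induction strictly in order of increasing $k$, and it is also why the rewriting $e^{\ell_r}=1+(e^{\ell_r}-1)$ is essential in the exponential case, to separate the constant parts from the parts of positive valuation.
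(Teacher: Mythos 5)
Your proof is correct and follows essentially the same route as the paper: you establish that $\ell_r$ (and hence $e^{\ell_r}-1$) has valuation exactly $r$ at the origin with nonzero leading coefficient ($\gamma$ for $r=1$, $\pm\zeta(r)$ for $r\ge2$), i.e. the families are triangular, and then conclude linear freeness (and freeness from the constant $1$) by the standard valuation induction. The paper states this more tersely by invoking the general fact that triangular families are $\C$-linearly free; your write-up just makes that induction explicit.
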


\begin{proof}
Since $(\ell_r)_{r\ge1}$ is triangular\footnote{A family $(g_i)_{i\ge1}$
is said to be {\it triangular} if the valuation of $g_i,\varpi(g_i),$ equals $i\ge1$.
It is easy to check that such a family is $\C$-linearly free and that is also the case
of families such that $(g_i-g(0))_{i\ge1}$ is triangular.} then $(\ell_r)_{r\ge1}$ is
$\C$-linearly free. So is $(e^{\ell_r}-e^{\ell_r(0)})_{r\ge1}$, being triangular,
we get that $(e^{\ell_r})_{r\ge1}$ is $\C$-linearly independent and free from $1_{\calH(\Omega)}$.
\end{proof}

Now, for any $r\ge1$, let $G_r$ (resp. $\calG_r$) denote the set (resp. group) of solutions,
$\{\xi_0,\ldots,\xi_{r-1}\}$, of the equation $z^r=(-1)^{r-1}$ (resp. $z^r=1$). If $r$ is odd,
it is a group as $G_r=\calG_r$ otherwise it is an orbit as $G_r=\xi\calG_r$, where $\xi$ is
any solution of $\xi^r=-1$ (this is equivalent to $\xi\in\calG_{2r}$ and $\xi\notin\calG_r$).
For $r,q\ge1$, we will need also a system $\mathbb{X}$ of representatives of $\calG_{qr}/\calG_r$,
{\it i.e.} $\mathbb{X}\subset \calG_{qr}$ such that 
\begin{eqnarray}
\calG_{qr}=\biguplus_{\tau\in\mathbb{X}}\tau\calG_r.
\end{eqnarray}
It can also be assumed that $1\in\mathbb{X}$ as with $\mathbb{X}=\{e^{2\mathrm{i}k\pi/qr}\}_{0\le k\le q-1}$.

\begin{proposition}[\cite{BHN}]\label{Weierstrass2}
\begin{enumerate}
\item For $r\ge1,\chi\in\calG_r$ and $z\in\C,|z|<1$,
the functions $\ell_r$ and $e^{\ell_r}$ have the symmetry,
$\ell_r(z)=\ell_r(\chi z)$ and $e^{\ell_r(z)}=e^{\ell_r(\chi z)}$.

In particular, for $r$ even, as $-1\in\calG_r$, these functions are even.
\item For $|z|<1$, we have
\begin{eqnarray*}
\ell_r(z)=-\sum_{\chi\in G_r}\log(\Gamma(1+\chi z))&\mbox{and}&
e^{\ell_r(z)}=\prod_{\chi\in G_r}e^{\gamma\chi z}
\prod_{n\ge1}(1+{\chi z}/{n})e^{-{\chi z}/n}.
\end{eqnarray*}
\item For any odd $r\ge2$,
\begin{eqnarray*}
\Gamma_{y_r}^{-1}(1+z)=e^{\ell_r(z)}=\Gamma^{-1}(1+z)\prod_{\chi\in G_r\rsetminus{\{1\}}}e^{\ell_1(\chi z)}
\end{eqnarray*}
\item and, in general, for any odd or even $r\ge2$,
\begin{eqnarray*}
\ell_r(z)=\prod_{\chi\in G_r}e^{\ell_1(\chi z)}=\prod_{n\ge1}(1+{z^r}/{n^r}).
\end{eqnarray*}
\item For $r\ge1$, the function $\ell_r$ is holomorphic on the open unit disc, $D_{<1}$,
\item For $r\ge1$, the  function $e^{\ell_r}$ (resp. $e^{-\ell_r}$) is entire
(resp. meromorphic), and admits a countable set of isolated zeroes (resp. poles)
on the complex plane which is expressed as $\biguplus_{\chi\in G_r}\chi\Z_{\le-1}$.
\end{enumerate}
\end{proposition}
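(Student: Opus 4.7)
The backbone of the argument is Part 2, from which Parts 3, 4 and 6 will all follow by exponentiation and elementary manipulations. My plan is therefore to reduce everything to an expansion of the series $\sum_{\chi\in G_r}\ell_1(\chi z)$ and to standard facts about the Weierstrass product of the Gamma function.

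For Part 1, I would read off the invariance directly from the defining series \eqref{ell}: only powers $z^{nr}$ appear, so the substitution $z\mapsto\chi z$ with $\chi^r=1$ fixes each monomial. For $r$ even this gives parity since $-1\in\calG_r$.

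For Part 2, I plan to compute
\begin{eqnarray*}
\sum_{\chi\in G_r}\ell_1(\chi z)
=\gamma z\sum_{\chi\in G_r}\chi-\sum_{n\ge 2}\frac{(-z)^n\zeta(n)}{n}\sum_{\chi\in G_r}\chi^n,
\end{eqnarray*}
and evaluate the inner character sums by distinguishing the two cases $r$ odd ($G_r=\calG_r$) and $r$ even ($G_r=\xi\calG_r$ with $\xi^r=-1$). In both cases $\sum_{\chi\in G_r}\chi^n$ vanishes unless $r\mid n$, and when $n=rk$ a short computation gives $\sum_{\chi\in G_r}\chi^{rk}=r(-1)^k$ for $r$ even and $r$ for $r$ odd; combined with the parity of $(-z)^{rk}$, the two cases collapse to the single formula $\sum_{\chi\in G_r}\chi^n(-z)^n/n = r(-z^r)^k/(rk)$ whenever $n=rk$. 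Matching the answer to \eqref{ell} gives $\ell_r(z)=\sum_{\chi\in G_r}\ell_1(\chi z)$, and since $\ell_1=-\log\Gamma(1+\cdot)$ on the open unit disc, this identifies $\ell_r(z)=-\sum_{\chi\in G_r}\log\Gamma(1+\chi z)$. Exponentiating and plugging in Weierstrass' product for $\Gamma^{-1}$ yields the second identity of Part 2. This character-sum bookkeeping is where I expect the main difficulty, because getting the signs correct simultaneously in the odd and even case is delicate.

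Part 3 then follows at once: for $r$ odd one has $1\in G_r$, so the factor $\chi=1$ in the product of Part 2 is exactly $\Gamma^{-1}(1+z)$, and the remaining factors give the stated formula via \eqref{NewEulerian}. For Part 4 (reading the left-hand side as $e^{\ell_r(z)}$, i.e.\ $\Gamma^{-1}_{y_r}(1+z)$), I would use Part 2 together with the vanishing of $\sum_{\chi\in G_r}\chi$ for $r\ge 2$ (true in both parities since $\calG_r$ has zero sum) to eliminate all the exponential correctors $e^{\pm\chi z/n}$, leaving
\begin{eqnarray*}
e^{\ell_r(z)}=\prod_{n\ge 1}\prod_{\chi\in G_r}\left(1+\frac{\chi z}{n}\right).
\end{eqnarray*}
The inner product is the value at $X=1$ of $\prod_{\chi\in G_r}(X+\chi z/n)$, which, using $\prod_{\chi\in G_r}(T-\chi)=T^r-(-1)^{r-1}$, simplifies to $1+z^r/n^r$ in both parities.

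Part 5 is immediate from the bound $1<\zeta(n)\le\zeta(2)$ which, as already observed after \eqref{ell}, makes the defining series of $\ell_r$ converge normally on every compact of $D_{<1}$. For Part 6, the infinite product $\prod_{n\ge 1}(1+z^r/n^r)$ obtained in Part 4 converges normally on every compact of $\C$ because $\sum_n|z|^r/n^r<\infty$ for $r\ge 1$; this exhibits $e^{\ell_r}$ as an entire function whose zero set is $\{z\in\C:\exists\,n\ge 1,\,z^r=-n^r\}$. A final check, factoring $-1$ as $\chi^r$ with $\chi$ running over the $r$-th roots of $-1$ and matching those with $G_r$ via multiplication by $-1$, rewrites this set as the disjoint union $\biguplus_{\chi\in G_r}\chi\Z_{\le-1}$, so $e^{-\ell_r}$ is meromorphic with the same countable set as its (simple) poles.
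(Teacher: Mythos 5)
Your plan follows essentially the same route as the paper: Part 1 by substitution in \eqref{ell}; Part 2 by averaging $\ell_1$ over $G_r$ (your explicit character sums $\sum_{\chi\in G_r}\chi^n$ are exactly the paper's ``symmetrization principle'', treated separately for $r$ odd, $G_r=\calG_r$, and $r$ even, $G_r=\xi\calG_r$) followed by the Weierstrass product of $\Gamma^{-1}$; Part 3 by isolating the factor $\chi=1$; Part 4 by killing the exponential correctors via $\sum_{\chi\in G_r}\chi=0$ and evaluating $\prod_{\chi\in G_r}(1+\chi z/n)=1+z^r/n^r$ (the paper's ``elementary symmetric functions of $G_r$''); Part 5 from $1<\zeta(n)\le\zeta(2)$. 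All of this is correct, including the sign bookkeeping in Part 2 and your reading of the obvious typo in Part 4 (the left-hand side should be $e^{\ell_r(z)}$).

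The one slip is in Part 6: you justify entirety by normal convergence of $\prod_{n\ge1}(1+z^r/n^r)$ ``because $\sum_n|z|^r/n^r<\infty$ for $r\ge1$'', but this fails at $r=1$ (harmonic series), which is precisely why the Weierstrass correctors $e^{-z/n}$ are needed for $\Gamma^{-1}$; moreover the corrector-free product of Part 4 is only available for $r\ge2$. The fix is immediate: treat $r=1$ as the classical fact that $\Gamma^{-1}(1+z)$ is entire with zeroes $\Z_{\le-1}$ (as the paper does), or argue as the paper for all $r$ by writing $e^{\ell_r(z)}=\prod_{\chi\in G_r}\Gamma^{-1}(1+\chi z)$, a finite product of entire functions, whose zero set is $\biguplus_{\chi\in G_r}\chi^{-1}\Z_{\le-1}=\biguplus_{\chi\in G_r}\chi\Z_{\le-1}$ since $G_r$ is stable under inversion; your identification of the zero set for $r\ge2$ via $z^r=-n^r$ is otherwise fine. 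Note also that in either variant one should say explicitly that the entire function so obtained extends $e^{\ell_r}$, which a priori is only defined on $D_{<1}$.
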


\begin{proof}
The results are known for $r=1$ ({\it i.e.} for $\Gamma^{-1}$). For $r\ge2$, we get
\begin{enumerate}
\item By \eqref{ell}, with $\chi\in \calG_r$, we get
\begin{eqnarray*}
\ell_r(\chi z)=-\sum_{n\ge1}\zeta(kr)\frac{(-\chi^rz^r)^k}k=-\sum_{k\ge1}\zeta(kr)\frac{(-z^r)^k}k=\ell_r(z),
\end{eqnarray*}
thanks to the fact that, for any $\chi\in\calG_r$, one has $\chi^r=1$.

In particular, if $r$ is even then $\ell_r(z)=\ell_r(-z)$, {\it i.e.} $\ell_r$ is even.
\item If $r$ is odd, as $G_r=\calG_r$ and, applying the symmetrization principle\footnote{
Within the same disk of convergence as $f$, one has,
\begin{eqnarray*}
f(z)=\sum_{n\ge1}a_nz^n&\mbox{and}&
\sum_{\chi\in\calG_r}f(\chi z)=r\sum_{k\ge1}a_{rk}z^{rk}.
\end{eqnarray*}}, we get 
\begin{eqnarray*}
-\sum_{\chi\in G_r}\ell_1(\chi z)
=-\sum_{\chi\in\calG_r}\ell_1(\chi z)
=r\sum_{k\ge1}\zeta(kr)\frac{(-z)^{kr}}{kr}
=\sum_{k\ge1}\zeta(kr)\frac{(-z^r)^k}{k}.
\end{eqnarray*}
The last term being due to the fact that, precisely, $r$ is odd.

If $r$ is even, we have the orbit $G_r=\xi\calG_r$ (still with $\xi^r=-1$) and then, by the same principle,
\begin{eqnarray*}
-\sum_{\chi\in\calG_r}\ell_1(\chi\xi z)
=r\sum_{k\ge1}\zeta(kr)\frac{(-\xi z)^{kr}}{kr}
=\sum_{k\ge1}\zeta(kr)\frac{\bigl((-\xi z)^{r}\bigr)^k}{k}
=\sum_{k\ge1}\zeta(kr)\frac{\bigl(-z^{r}\bigr)^k}{k}.
\end{eqnarray*}

\item Straightforward.

\item Due to the fact that the external product is finite, we can distribute it on each factor and get
\begin{eqnarray*}
e^{\ell_r(z)}=\overbrace{\Bigl(\prod_{\chi\in\calG_r}e^{\gamma\chi z}\Bigr)}^{=1}
\prod_{\matop{n\ge1}{\chi\in\calG_r}}\Bigl(1+\frac{\chi z}{n}\Bigr)e^{-\frac{\chi z}n}
=\overbrace{\Bigl(\prod_{\matop{n\ge1}{\chi\in\calG_r}}e^{-\frac{\chi z}n}\Bigr)}^{=1}
\prod_{\matop{n\ge1}{\chi\in\calG_r}}\Bigl(1+\frac{\chi z}{n}\Bigr).
\end{eqnarray*}
Using the elementary symmetric functions of $G_r$, we get the expected result.
\item One has $e^{\ell_1(z)}=\Gamma^{-1}(1+z)$ which proves the claim for $r=1$.
For $r\ge2$, note that $1\leq\zeta(r)\le\zeta(2)$ which implies that the radius of convergence
of the exponent is $1$ and means that $\ell_r$ is holomorphic on the open unit disc.
This proves the claim.
\item The function $e^{\ell_r(z)}=\Gamma_{y_r}^{-1}(1+z)$ (resp. $e^{-\ell_r(z)}=\Gamma_{y_r}(1+z)$)
is entire (resp. meromorphic) as finite product of entire (resp. meromorphic) functions, for $r\ge1$.
The factorization in Proposition \ref{Weierstrass2} yields the set of zeroes (resp. poles).
\end{enumerate}
\end{proof}

As an example of projection, through $\hat\gamma_\bullet$ of an algebraic identity
let us mention, for any $z\in\C,\abs{z}<1$, one has, from \eqref{WIk}, \cite{PVNC}
\begin{eqnarray}
(z^ky_k)^*\stuffle(-z^ky_k)^*=(-z^{2k}y_{2k})^*,
\end{eqnarray}
which, transformed by $\hat\gamma_\bullet$ and for $\abs{|z}<1$ and $k\ge1$,
amounts to Euler's reflection formula (generalized to arbitrary $k$) \cite{BHN}
\begin{eqnarray}\label{reflection}
\Gamma_{y_{2r}}(1+z)=\Gamma_{y_r}(1+\rho z)\Gamma_{y_r}(1+\rho\xi z),
\end{eqnarray} 
where $\rho$ is a $2r^{th}-$root of $(-1)$ and $\xi$ a primitive $2r^{th}$ root of unity.

It is well known that the function $e^{\ell_1(z)}=\Gamma^{-1}(1+z)$ is entire.
In fact, all functions  \eqref{NewEulerian} are so (see Proposition \ref{plein}). 
From this, ones get that \eqref{reflection} holds on the whole plane.

\begin{example}[\cite{words03,orlando}]
Let us give examples relating to polyzetas. For that, we use the following identities,
for $z\in\C,\abs{z}<1$, (see \cite{PVNC})
\begin{eqnarray*}
(-zx_1)^*\shuffle(zx_1)^*=1&\mbox{and}&(-zy_1)^*)^*\stuffle(zy_1)^*=(-z^2y_2)^*,\\
(-z^2x_0x_1)^*\shuffle(z^2x_0x_1)^*=(-z^4x_0^2x^2_1)^*&\mbox{and}&(-z^2y_2)^*\stuffle(z^2y_2)^*=(-z^4y_4)^*.
\end{eqnarray*}

\begin{itemize}
\item From \eqref{reflection} and for $r=1$, we have
\begin{eqnarray*}
\hat\gamma_{(-z^2y_2)^*}&=&\hat\gamma_{(zy_1)^*}\hat{\gamma}_{(-zy_1)^*}\\
\Gamma_{y_2}^{-1}(1+\mathrm{i}z)&=&\Gamma_{y_1}^{-1}(1+z)\Gamma_{y_1}^{-1}(1-z)\\
e^{-\sum\limits_{n\ge2}\zeta(2n){z^{2n}}/n}&=&\dfrac{\sin(z\pi)}{z\pi}
=\sum\limits_{k\ge1}\dfrac{(z\mathrm{i}\pi)^{2k}}{(2k)!}.
\end{eqnarray*}
One can show (with a suitable extension of $\zeta$, see \cite{words03,orlando}) that\footnote{
Recall that, for any $w\in Y^*\setminus y_1Y^*$, one has $\gamma_w=\zeta(\pi_X(w))$ \cite{CM}
and then it can be extended over series.} $\hat\gamma_{(-z^2y_2)^*}=\zeta((-z^2x_0x_1)^*)$.
Then, identifying the coeffients of $z^{2k}$, we get
\begin{eqnarray*}
\frac{\zeta(\overbrace{{2,\ldots,2}}^{k{\tt times}})}{\pi^{2k}}=\dfrac{1}{(2k+1)!}\in\Q.
\end{eqnarray*}

\item Now, with $r=2$, letting $\rho^4=-1$, we have
\begin{eqnarray*}
\hat\gamma_{(-z^4y_4)^*}&=&\hat\gamma_{(z^2y_2)^*}\hat{\gamma}_{(-z^2y_2)^*}\\
\Gamma_{y_4}^{-1}(1+ z)&=&\Gamma_{y_2}^{-1}(1+\rho z)\Gamma_{y_2}^{-1}(1+\mathrm{i}\rho z),\cr
e^{-\sum\limits_{k\ge1}\zeta(4k){(-1)^kz^{4k}}/k}
&=&\dfrac{\sin(\mathrm{i}\rho z\pi)}{\mathrm{i}\rho z\pi}\dfrac{\sin(\rho \pi z)}{z\pi \rho}.
\end{eqnarray*}
Again, with a suitable extension of\footnote{\textit{idem}.} $\zeta$ (see \cite{words03,orlando})
\begin{eqnarray*}
\hat{\gamma}_{(-z^4y_4)^*}=\zeta((-z^4y_4)^*),
&\hat{\gamma}_{(-z^2y_2)^*}=\zeta((-z^2y_2)^*),
&\hat{\gamma}_{(z^2y_2)^*}=\zeta((z^2y_2)^*)
\end{eqnarray*}
then, using the poly-morphism $\zeta$, we obtain
\begin{eqnarray*}
\zeta((-z^4y_4)^*)
&=&\zeta((-z^2y_2)^*)\zeta((z^2y_2)^*)\\
&=&\zeta((-z^2x_0x_1)^*)\zeta((z^2x_0x_1)^*))\\
&=&\zeta((-4z^4x_0^2x_1^2)^*).
\end{eqnarray*}
It follows then, by identification the coeffients of $z^{4k}$, that
\begin{eqnarray*}
\dfrac{\zeta(\overbrace{{3,1,\ldots,3,1}}^{k{\tt times}})}{\pi^{4k}}
=\dfrac{4^k\zeta(\overbrace{{4,\ldots,4}}^{k{\tt times}})}{\pi^{4k}}
=\dfrac{2}{(4k+2)!}\in\Q.
\end{eqnarray*}
\end{itemize}
\end{example}

\section{Conclusion}
Noncommutative symbolic calculus allows to get identities easy to check and to implement. 
With some amount of complex and functional analysis, it is possible to bridge the gap between symbolic,
functional and number theoretic worlds. This was the case already for polylogarithms and polyzetas. 
This is the project of this paper and will be pursued in forthcoming works.
\section*{Acknowledgment}
The authors would like to thank many colleagues for detailed reading of the manuscript
and many valuable comments by which this paper could be improved.


\end{document}